\documentclass[preprint,3p]{elsarticle}

\usepackage{amssymb}

\usepackage{amsmath,amsthm}   
\usepackage{lipsum}
\usepackage{amsfonts}
\usepackage{graphicx}
\usepackage{epstopdf}
\usepackage{algorithmic}
\ifpdf
  \DeclareGraphicsExtensions{.eps,.pdf,.png,.jpg}
\else
  \DeclareGraphicsExtensions{.eps}
\fi
\usepackage{tikz}
\usetikzlibrary{arrows,decorations,patterns,positioning,automata,shadows,fit,shapes,calc,decorations.markings,backgrounds,scopes,decorations.text}
\tikzset{>=latex}
\usepackage{graphicx}     
\usepackage{quiver}

\newtheorem{theorem}{Theorem}
\newtheorem{lemma}{Lemma}

\newcommand{\nullspace}[1]{\hbox{\textbf{N}}(#1)}
\newcommand{\colspace}[1]{\hbox{\textbf{C}}(#1)}

\myfooter[C]{} 

\begin{document}

\begin{frontmatter}



\title{Generalized Inverses of Matrix Products:\\From Fundamental Subspaces to Randomized Decompositions}


\author[a]{Micha\l\ P. Karpowicz}
\ead{m.karpowicz@samsung.com}\affiliation[a]{organization={Samsung AI Center},city={Warsaw},country={Poland}}
\author[b]{Gilbert Strang}
\ead{gilstrang@gmail.com}
\affiliation[b]{organization={MIT},state={Massachusetts},country={USA}}

\begin{abstract}

We investigate the Moore-Penrose pseudoinverse and generalized inverse of a matrix product $A=CR$ to establish a unifying framework for generalized and randomized matrix inverses. This analysis is rooted in first principles, focusing on the geometry of the four fundamental subspaces. We examine:

\begin{enumerate}
\item The reverse order law, $A^+ = R^+C^+$, which holds when $C$ has independent columns and $R$ has independent rows.
\item The universally correct formula, $A^+ = (C^+CR)^+(CRR^+)^+$, providing a geometric interpretation of the mappings between the involved subspaces.
\item A new generalized randomized formula, $A^+_p = (P^TA)^+P^TAQ(AQ)^+$, which gives $A^+_p = A^+$ if and only if the sketching matrices $P$ and $Q$ preserve the rank of $A$, i.e.,  $\mathrm{rank}(P^TA) = \mathrm{rank}(AQ) = \mathrm{rank}(A)$.
\end{enumerate}

The framework is extended to generalized $\{1,2\}$-inverses and specialized forms, revealing the underlying structure of established randomized linear algebra algorithms, including randomized SVD, the Nyström approximation, and CUR decomposition. We demonstrate applications in sparse sensor placement and effective resistance estimation. For the latter, we provide a rigorous quantitative analysis of an approximation scheme, establishing that it always underestimates the true resistance and deriving a worst-case spectral bound on the error of resistance differences.

\end{abstract}

\begin{keyword}
pseudoinverse \sep generalized inverse \sep fundamental subspaces
\sep randomized algorithms


15A09, 15A23, 15A24, 65F05, 65F30
\end{keyword}

\end{frontmatter}



\section{Introduction}

The reverse-order law for pseudoinverse is not generally true, as the following example shows:
\begin{gather*}
C = \left[\begin{array}{rrr}1&0\\\end{array}\right],\  
R = \left[\begin{array}{rrr}1\\1\\\end{array}\right],\  
(CR)^+ = \left[\begin{array}{r}1\\\end{array} \right],\ 
R^+C^+ = \left[\begin{array}{rrr}\dfrac{1}{2}&\dfrac{1}{2}\\\end{array}\right]
\left[\begin{array}{rrr}1\\0\\\end{array}\right]
=\left[\dfrac{1}{2}\right].
\end{gather*}
Here, $C$ has a full row rank and $R$ has a full column rank. We need to have it the other way around as a sufficient condition in Theorem~\ref{thm:rol-formula}. Then Theorem~\ref{thm:correct-formula} will give a formula for the pseudoinverse of $A = CR$ that applies in every case. Theorem~\ref{thm:general-MP-formula} will give a general recipe of the pseudoinverse and its randomized form, reconstructing it only in special cases. Then, Theorem~\ref{thm:Nystroem-MP-formula} links the three formulas with the generalized Nystroem decomposition.

This paper is partially pedagogical and partially exploratory. First, we take a closer look at the facts and results that are mostly well-known and can be found in the literature on generalized inverses, including \cite{ben2003generalized,campbell2009generalized,ilic2017algebraic}. But, in doing so, we contemplate the first principles behind mappings connecting the four fundamental subspaces. In the alternate proofs we present, we purposefully avoid Penrose identities and SVD-based arguments and focus on the underpinnings of generalized inverses. We believe that may be inspiring and illuminating enough to provide an even better understanding of the structure and internal mechanics of pseudoinverse. 

That approach leads us next to the exploratory and research-oriented part of the paper, where we derive the general formula for randomized pseudoinverse and use it to derive some previously unknown results in subsection \ref{subsection:Effective resistance calculations}. The generalized formula produces pseudoinverse only under special circumstances with rank-preserving sampling matrices. Otherwise, it is a recipe for low-rank approximations and other known or possibly unknown formulas. We believe that is a valuable contribution to the well-established field of randomized linear algebra, which is beautifully presented in \cite{halko2011finding,murray2023randomized,martinsson2020randomized}.
Although SVD and its randomized version are regarded as leading algorithms to compute the Moore-Penrose inverse, whenever that is truly necessary, we show how to introduce parameters into the pseudoinverse formula. These can be optimized or used to study the properties of generalized inverse mapping. 

\section{The reverse order law for pseudoinverse}

When the $m$ by $r$ matrix $C$ has $r$ independent columns (full column rank~$r$), and the $r$ by $n$ matrix $R$ has $r$ independent rows (full row rank $r$), the pseudoinverse $C^+$ is the left inverse of $C$, and the pseudoinverse $R^+$ is the right inverse of $R$\,:
$$
C^+=(C^\mathrm{T} C)^{-1}C^\mathrm{T}\text{ \;has\; } C^+C=I_{\text{\small$r$}}\ \mathrm{and}\  R^+=R^\mathrm{T}(RR^\mathrm{T})^{-1}\text{ \;has\; } RR^+=I_{\text{\small$r$}}.
$$
In this case, the $m$ by $n$ matrix $A=CR$ will also have rank $r$ and the statement correctly claims that its $n$ by $m$ pseudoinverse is $A^+=R^+C^+$.

\vspace{10pt}
\begin{theorem}\label{thm:rol-formula}
The pseudoinverse $A^+$ of a product $A=CR$ is the product of the pseudoinverses $R^+C^+$ if $C \in \mathbb{R}^{m \times r}$ has full column rank $r$ and $R \in \mathbb{R}^{r \times n}$ has full row rank $r$.
\end{theorem}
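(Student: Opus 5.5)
The plan is to verify directly that $X = R^+C^+ = R^\mathrm{T}(RR^\mathrm{T})^{-1}(C^\mathrm{T}C)^{-1}C^\mathrm{T}$ has the defining geometric properties of the pseudoinverse. In the spirit of the paper we avoid the Penrose identities. $A^+$ is the unique $n$ by $m$ matrix that vanishes on $\nullspace{A^\mathrm{T}}$, maps $\colspace{A}$ back into the row space $\colspace{A^\mathrm{T}}$, and inverts $A$ there, i.e.\ $XAx = x$ for every $x \in \colspace{A^\mathrm{T}}$.

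The first step identifies the fundamental subspaces of $A=CR$ with those of the factors. Since $C$ has independent columns, $Cy=0$ forces $y=0$, so $\nullspace{A}=\nullspace{R}$ and hence $\colspace{A^\mathrm{T}} = \colspace{R^\mathrm{T}}$. Since $R$ has independent rows, $R$ maps $\mathbb{R}^n$ onto $\mathbb{R}^r$, so $\colspace{A} = \colspace{C}$ and $\nullspace{A^\mathrm{T}} = \nullspace{C^\mathrm{T}}$. In particular $\operatorname{rank} A = r$.

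The second step checks the three properties of $X$.
\begin{itemize}
\item If $z\in\nullspace{A^\mathrm{T}} = \nullspace{C^\mathrm{T}}$, then $C^+z = (C^\mathrm{T}C)^{-1}C^\mathrm{T}z = 0$, so $Xz=0$.
\item Every output of $X$ has the form $R^\mathrm{T}(\cdots)$, so it lies in $\colspace{R^\mathrm{T}} = \colspace{A^\mathrm{T}}$.
\item For $x = R^\mathrm{T}w$ in the row space,
$$XAx = R^+(C^+C)RR^\mathrm{T}w = R^+RR^\mathrm{T}w = R^\mathrm{T}(RR^\mathrm{T})^{-1}RR^\mathrm{T}w = R^\mathrm{T}w = x,$$
using $C^+C=I_r$.
\end{itemize}

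The third step invokes uniqueness. Any $X$ with these three properties equals $A^+$, because $\mathbb{R}^m = \colspace{A}\oplus\nullspace{A^\mathrm{T}}$ and $X$ is determined on each summand. On $\colspace{A}$, take $b = Ax$ with $x$ chosen in the row space; such an $x$ always exists, since the component of any preimage lying in $\nullspace{A}$ contributes nothing to $Ax$. Then $Xb$ is forced to equal $x$, which is unique because $A$ is injective on the row space. On $\nullspace{A^\mathrm{T}}$, $X$ is forced to be $0$.

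The main obstacle is the first step, specifically the inclusions $\colspace{A}\supseteq\colspace{C}$ and $\nullspace{A}\subseteq\nullspace{R}$. This is exactly where the hypotheses on $C$ and $R$ enter, and the opening counterexample shows it fails when they are reversed. Everything after that is a short computation with $C^+C=I_r$ and $RR^+=I_r$.
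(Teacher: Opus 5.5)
Your proof is correct. It is a streamlined version of the paper's argument with a different closing step.

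The paper establishes the two-sided equalities $\nullspace{R^+C^+}=\nullspace{A^\mathrm{T}}$ and $\colspace{R^+C^+}=\colspace{A^\mathrm{T}}$ in Steps 1--4. It then computes $AR^+C^+=CC^+$ and $R^+C^+A=R^+R$, and concludes from the uniqueness of the orthogonal projectors onto $\colspace{A}$ and $\colspace{A^\mathrm{T}}$.

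You keep only the one-sided facts that matter. These are $\nullspace{A^\mathrm{T}}\subseteq\nullspace{X}$ (the paper's Step 1), the range inclusion (half of Step 4), and $XAx=x$ on the row space. The last is just $R^+C^+A=R^+R$ restricted to $\colspace{R^\mathrm{T}}$. Steps 2 and 3 and the identity $AR^+C^+=CC^+$ then become unnecessary, because you finish by noting that a linear map is determined by its action on the two summands of $\mathbb{R}^m=\colspace{A}\oplus\nullspace{A^\mathrm{T}}$.

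That closing is more self-contained than the paper's final sentence, which leaves implicit why matching projectors force $R^+C^+=A^+$. It also shows that your range check is redundant: your uniqueness argument uses only that $X$ vanishes on $\nullspace{A^\mathrm{T}}$ and that $XAx=x$ on the row space. The paper's route, in exchange, displays the projectors $CC^+$ and $R^+R$ explicitly. Its later discussion of Greville's conditions and the two-sided projection equation builds on them.

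One remark in your last paragraph is inaccurate. In the opening counterexample, $C=[1\ \ 0]$, $R=[1\ \ 1]^\mathrm{T}$, $A=[1]$, the two inclusions you single out still hold: $\colspace{A}=\colspace{C}=\mathbb{R}$ and $\nullspace{A}=\nullspace{R}=\{\boldsymbol{0}\}$. What breaks is $C^+C=I_r$, since here $C^+C=\mathrm{diag}(1,0)$. So it is your third bullet that fails, with $XAx=x/2$ for $X=R^+C^+=\tfrac12$. The hypotheses on $C$ and $R$ are needed in both places. This misattribution does not affect the validity of your proof.
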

\vspace{10pt}

The simplest proof verifies the four Penrose identities \cite{penrose1955generalized} that determine the pseudoinverse $A^+$ of any matrix $A$\,:

\vspace{-11pt}
\begin{equation}
\!\!
AA^+A=A,
\qquad\quad 
A^+AA^+=A^+,
\qquad 
(A^+A)^\mathrm{T}=A^+A,
\qquad 
(AA^+)^\mathrm{T}=AA^+.
\!\!
\label{eq:Penrose}
\end{equation}

\noindent
Our goal is a different proof of $A^+=R^+C^+$, starting from first principles. We begin with the \textbf{four fundamental subspaces} associated with any $m$ by $n$ matrix $A$ of rank $r$. Those are the column space $\hbox{\textbf{C}}$ and nullspace $\hbox{\textbf{N}}$ of $A$ and $A^\mathrm{T}$.

Note that every matrix $A$ gives an invertible map (Figure \ref{fig:1}) from its row space $\hbox{\textbf{C}}(A^\mathrm{T})$ to its column space $\hbox{\textbf{C}}(A)$. If $\boldsymbol{x}$ and $\boldsymbol{y}$ are in the row space and $A\boldsymbol{x}=A\boldsymbol{y}$, then $A(\boldsymbol{x}-\boldsymbol{y})=\boldsymbol{0}$. Therefore $\boldsymbol{x}-\boldsymbol{y}$ is in the nullspace $\nullspace{A}$ as well as the row space. So $\boldsymbol{x}-\boldsymbol{y}$ is orthogonal to itself and $\boldsymbol{x}=\boldsymbol{y}$.

The pseudoinverse $A^+$ in Figure \ref{fig:2} inverts the row space to column space map in Figure \ref{fig:1} when $\hbox{\textbf{N}}(A^T) = \hbox{\textbf{N}}(A^+)$. If $\boldsymbol{b}_c$ and $\boldsymbol{d}_c$ are in the column space $\hbox{\textbf{C}}(A)$ and $\boldsymbol{x} = A^+\boldsymbol{b}_c=A^+\boldsymbol{d}_c$, then $A^+(\boldsymbol{b}_c-\boldsymbol{d}_c)=\boldsymbol{0}$ and $\boldsymbol{b}_c-\boldsymbol{d}_c$ is in the nullspace $\hbox{\textbf{N}}(A^+)$. Therefore, if $\hbox{\textbf{N}}(A^+) = \hbox{\textbf{N}}(A^T)$, then $\boldsymbol{b}_c-\boldsymbol{d}_c$ is in the nullspace of $A^T$ and the column space of $A$, it is orthogonal to itself and $(\boldsymbol{b}_c-\boldsymbol{d}_c)^T(\boldsymbol{b}_c-\boldsymbol{d}_c) = 0$ with $\boldsymbol{b}_c=\boldsymbol{d}_c$. For all vectors $\boldsymbol{b}_n$ in the orthogonal complement of $\hbox{\textbf{C}}(A)$, we have $A^\mathrm{T}\boldsymbol{b}_n=\boldsymbol{0}$ and  $A^+\boldsymbol{b}_n=\boldsymbol{0}$. Then Theorem \ref{thm:rol-formula} shows $A^+$ is the inverse map.

That upper map from the row space $\hbox{\textbf{C}}(A^\mathrm{T})$ to the column space $\hbox{\textbf{C}}(A)$ is inverted by the pseudoinverse $A^+$ in Figure \ref{fig:2}. And the nullspace of $\boldsymbol{A^+}$ is the same as the nullspace of $\boldsymbol{A^\mathrm{T}}$---the orthogonal complement of $\hbox{\textbf{C}}(A)$. \textbf{Thus $\boldsymbol{1/0=0}$ for $\boldsymbol{A^+}$}, the pseudoinverse maps the orthogonal complement of the column space to the zero vector. In the extreme case, the pseudoinverse of $A=$ zero matrix $(m$ by $n$) is $A^+=$ zero matrix ($n$ by $m$).

\begin{figure}[!t]
\centering
\scalebox{0.85}{
\begin{tikzpicture}[scale=2.3]
\node at (0.8,2.6) {\scalebox{1.1}{$\hbox{\textbf{C}}(\boldsymbol{A^\mathrm{T}})$}};
\draw (1.07,3.40)--(1.88,2.59)--(0.35,1.03)--(1.35,0.03)--(1.87,0.56)--(0.06,2.37)--cycle;
\draw (0.79,1.65)--(0.69,1.55)--(0.77,1.46);
\node at (0.30,3.12) {dim $\boldsymbol{r}$};
\node at (0.37,0.47) {dim $\boldsymbol{n-r}$};
\node at (1.0,0.8) {\scalebox{1.1}{$\hbox{\textbf{N}}(\boldsymbol{A})$}};
\node at (0.49,1.55) {\scalebox{1.1}{$\hbox{\textbf{R}}^{\text{\scriptsize{$\boldsymbol{n}$}}}$}};
\node at (1.4,2.35) {$\boldsymbol{x}_\text{\footnotesize ${\boldsymbol{r}}$}$};
\draw [fill=black](1.6, 2.3) circle (0.8pt);
\draw [fill=black](4.25, 2.3) circle (0.8pt);
\draw [fill=black](1.90,2.05) circle (0.8pt);
\draw [fill=black](1.15,1.28) circle (0.8pt);
\draw [dashed](1.6, 2.3)--(1.90,2.05)--(1.15,1.28);
\draw[decoration={markings, mark=at position 0.6 with {\arrow[scale=2]{>}}},postaction={decorate}](1.6, 2.3)--(4.25, 2.3);
\node at (3.15,2.52) {$A\boldsymbol{x}_\text{\footnotesize ${\boldsymbol{r}}$}=\boldsymbol{b}_\text{\footnotesize ${\boldsymbol{c}}$}$};
\draw[decoration={markings, mark=at position 0.7 with {\arrow[scale=2]{>}}},postaction={decorate}](1.90,2.05)--(4.25, 2.3);
\node at (2.37,1.9) {$\boldsymbol{x}=\boldsymbol{x}_\text{\footnotesize ${\boldsymbol{r}}$}+\boldsymbol{x}_\text{\footnotesize ${\boldsymbol{n}}$}$};
\node at (3.45,2.05) {$A\boldsymbol{x}=\boldsymbol{b}_\text{\footnotesize ${\boldsymbol{c}}$}$};
\draw[decoration={markings, mark=at position 0.5 with {\arrow[scale=2]{>}}},postaction={decorate}](1.15,1.28)--(4.4, 1.58);
\node at (0.98,1.25) {$\boldsymbol{x}_\text{\footnotesize ${\boldsymbol{n}}$}$};
\node at (2.65,1.25) {$A\boldsymbol{x}_\text{\footnotesize ${\boldsymbol{n}}$}=\boldsymbol{0}$};
\draw (3.79,2.88)--(4.83,3.36)--(5.43,2.06)--(3.11,0.97)--(3.36,0.43)--(4.64,1.05)--cycle;
\draw (4.51,1.63)--(4.57,1.52)--(4.45,1.46);
\node at (5.33,3.12) {dim $\boldsymbol{r}$};
\node at (4.70,2.6) {\scalebox{1.1}{$\hbox{\textbf{C}}(\boldsymbol{A})$}};
\node at (4.59,0.72) {dim $\boldsymbol{m-r}$};
\node at (3.9,1.0) {\scalebox{1.1}{$\hbox{\textbf{N}}(\boldsymbol{A^\mathrm{T}})$}};
\node at (4.79,1.53) {\scalebox{1.1}{$\hbox{\textbf{R}}^{\text{\scriptsize{$\boldsymbol{m}$}}}$}};
\node at (4.25,2.45) {$\boldsymbol{b}_\text{\footnotesize ${\boldsymbol{c}}$}$};%
\end{tikzpicture}
}
\vspace{-5pt}
\caption{
$A\boldsymbol{x}_\text{\footnotesize ${\boldsymbol{r}}$}=\boldsymbol{b}$ is in the column space of $A$ and $A\boldsymbol{x}_\text{\footnotesize ${\boldsymbol{n}}$}=\boldsymbol{0}$. The complete solution to $A\boldsymbol{x}=\boldsymbol{b}$ is $\boldsymbol{x}=$ \textbf{\,one\;} $\boldsymbol{x}_\text{\footnotesize ${\boldsymbol{r}}$}\,+$ \textbf{\,any\;} $\boldsymbol{x}_\text{\footnotesize ${\boldsymbol{n}}$}$. 
}\label{fig:1}
\end{figure}

\begin{figure}[ht!]
\centering
\scalebox{0.85}{\hspace{9pt}
\begin{tikzpicture}[scale=2.3]
\draw (1.07,3.40)--(1.88,2.59)--(0.35,1.03)--(1.35,0.03)--(1.87,0.56)--(0.06,2.38)--cycle;
\draw (0.79,1.65)--(0.69,1.55)--(0.77,1.46);
\node at (0.30,3.12) {dim $\boldsymbol{r}$};
\node at (1.05,2.6) {\scalebox{1.1}{$\hbox{\textbf{C}}(\boldsymbol{A^\mathrm{T}})=\hbox{\textbf{C}}(\boldsymbol{A^+})$}};
\node at (0.62,0.27) {dim $\boldsymbol{n-r}$};
\node at (0.9,1.0) {\scalebox{1.1}{$\hbox{\textbf{N}}(\boldsymbol{A})$}};
\node at (0.49,1.55) {\scalebox{1.1}{$\hbox{\textbf{R}}^{\text{\scriptsize{$\boldsymbol{n}$}}}$}};
\node at (1.4,2.35) {$\boldsymbol{x}_\text{\footnotesize ${\boldsymbol{r}}$}$};
\draw [fill=black](1.6, 2.3) circle (0.8pt);
\draw [fill=black](4.6, 2.25) circle (0.8pt);
\draw [fill=black](3.73,1.85) circle (0.8pt);
\draw [fill=black](3.95,1.36) circle (0.8pt);
\draw [dashed](4.6, 2.25)--(3.73,1.85)--(3.95,1.36);
\draw [dashed](1.6, 2.3)--(3.73,1.85);
\draw[decoration={markings, mark=at position 0.3 with {\arrow[scale=2]{<}}},postaction={decorate}](1.6, 2.3)--(4.6, 2.25);
\draw[decoration={markings, mark=at position 0.3 with {\arrow[scale=2]{<}}},postaction={decorate}](1.6, 2.3)--(3.73,1.85);
\node at (3.15,2.42) {$A^{\pmb{+}}\boldsymbol{b}_\text{\footnotesize ${\boldsymbol{c}}$}=\boldsymbol{x}_\text{\footnotesize ${\boldsymbol{r}}$}$};
\node at (3.30,2.05)[rotate=-10] {$\boldsymbol{b}=\boldsymbol{b}_\text{\footnotesize ${\boldsymbol{c}}$}+\boldsymbol{b}_\text{\footnotesize ${\boldsymbol{n}}$}$};
\node at (2.67,1.9) {$A^{\pmb{+}}\boldsymbol{b}=\boldsymbol{x}_\text{\footnotesize ${\boldsymbol{r}}$}$};
\draw[decoration={markings, mark=at position 0.3 with {\arrow[scale=2]{<}}},postaction={decorate}](0.88,1.56)--(3.95,1.36);
\node at (4.02,1.15) {$\boldsymbol{b}_\text{\footnotesize ${\boldsymbol{n}}$}$};
\node at (2.05,1.35) {$A^{\pmb{+}}\boldsymbol{b}_\text{\footnotesize ${\boldsymbol{n}}$}=\boldsymbol{0}$};
\draw (3.79,2.88)--(4.83,3.36)--(5.43,2.06)--(3.11,0.97)--(3.36,0.43)--(4.64,1.05)--cycle;
\draw (4.51,1.63)--(4.57,1.52)--(4.45,1.46);
\node at (5.33,3.12) {dim $\boldsymbol{r}$};
\node at (4.0,0.39) {dim $\boldsymbol{m-r}$};
\node at (4.79,1.48) {\scalebox{1.1}{$\hbox{\textbf{R}}^{\text{\scriptsize{$\boldsymbol{m}$}}}$}};
\node at (4.6,2.8) {\scalebox{1.1}{$\hbox{\textbf{C}}(\boldsymbol{A})$}};
\node at (4.9,0.7) {\scalebox{1.1}{$\hbox{\textbf{N}}(\boldsymbol{A^{\pmb{+}}})=\hbox{\textbf{N}}(\boldsymbol{A^\mathrm{T}})$}};
\node at (4.45,2.45) {$\boldsymbol{b}_\text{\footnotesize ${\boldsymbol{c}}$}$};%
\end{tikzpicture}
}
\vspace{-5pt}
\caption{The four subspaces for $A^{\pmb{+}}$ are the four subspaces for $A^\mathrm{T}$.}\label{fig:2}
\end{figure}
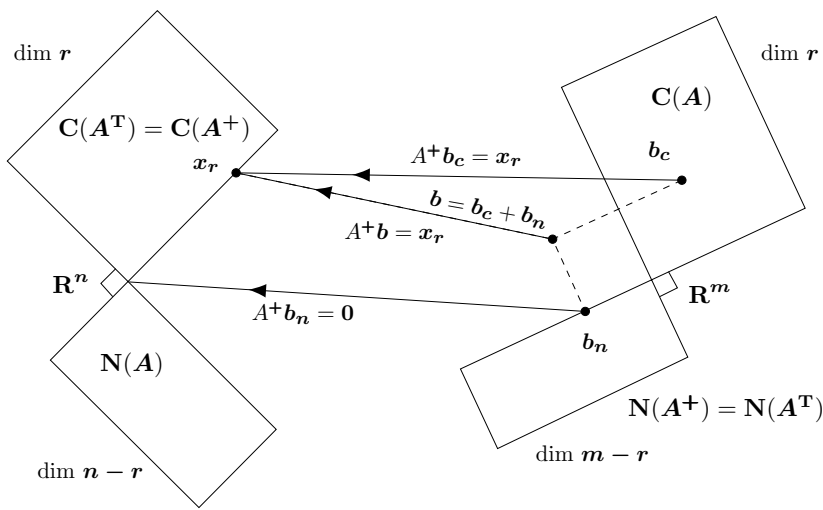

\newpage
\noindent
The proof of Theorem 1 begins with a simple Lemma.

\begin{lemma}\label{lemma:C-deflation-N-inflation}
We have $\boldsymbol{\mathrm{C}}(A)=\boldsymbol{\mathrm{C}}(AB)$ if and only if $\mathrm{rank}(A)=\mathrm{rank}(AB)$, and $\boldsymbol{\mathrm{N}}(B)=\boldsymbol{\mathrm{N}}(AB)$ if and only if $\mathrm{rank}(B)=\mathrm{rank}(AB)$.
\end{lemma}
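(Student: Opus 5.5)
The plan is to prove both equivalences from two inclusions that always hold and then compare dimensions.

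For every product, $\colspace{AB}\subseteq\colspace{A}$, because $AB\boldsymbol{x}=A(B\boldsymbol{x})$. Likewise $\nullspace{B}\subseteq\nullspace{AB}$, because $B\boldsymbol{x}=\boldsymbol{0}$ implies $AB\boldsymbol{x}=\boldsymbol{0}$. I will use the standard fact that a subspace contained in another subspace of the same finite dimension must equal it.

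For the first equivalence, suppose $\colspace{A}=\colspace{AB}$. Rank is the dimension of the column space, so $\mathrm{rank}(A)=\mathrm{rank}(AB)$ immediately. Conversely, if the ranks agree, then $\colspace{AB}$ is a subspace of $\colspace{A}$ with the same dimension, so the two spaces are equal.

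For the second equivalence, $B$ is $k\times n$ and $AB$ is $m\times n$, so both nullspaces live in $\mathbf{R}^n$. Rank--nullity gives
\[
\dim\nullspace{B}=n-\mathrm{rank}(B), \qquad \dim\nullspace{AB}=n-\mathrm{rank}(AB).
\]
If $\nullspace{B}=\nullspace{AB}$, these dimensions agree and hence so do the ranks. Conversely, if $\mathrm{rank}(B)=\mathrm{rank}(AB)$, the nullspaces have equal dimension. Together with $\nullspace{B}\subseteq\nullspace{AB}$, this forces equality.

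The only step needing care is the right choice of ambient space. The nullspaces must both be compared inside $\mathbf{R}^n$, which is why rank--nullity applies with the same $n$. The argument is otherwise routine. An alternative for the second part is to apply the first to transposes, $\colspace{B^\mathrm{T}A^\mathrm{T}}\subseteq\colspace{B^\mathrm{T}}$, and take orthogonal complements. I will use the direct rank--nullity argument instead, since it is shorter.
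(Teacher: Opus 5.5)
Your proof is correct and follows the same route as the paper. Both rely on the always-true inclusions $\colspace{AB}\subseteq\colspace{A}$ and $\nullspace{B}\subseteq\nullspace{AB}$ and conclude by comparing dimensions, and you simply make explicit the rank--nullity step for the nullspaces (and the trivial converse directions) that the paper leaves implicit.
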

\begin{proof}
Always $\boldsymbol{\mathrm{C}}(A)\supset\boldsymbol{\mathrm{C}}(AB)$ and $\boldsymbol{\mathrm{N}}(AB)\supset\boldsymbol{\mathrm{N}}(B)$. In each pair, equality of dimensions guarantees equality of spaces. The column space can only become smaller and the nullspace larger.
\end{proof}

\begin{proof}[Proof of Theorem~\ref{thm:rol-formula}]
First, we will show $\boldsymbol{\mathrm{N}}(A^\mathrm{T})\subset\boldsymbol{\mathrm{N}}(R^+C^+)$ and $\boldsymbol{\mathrm{N}}(A^\mathrm{T})\supset\boldsymbol{\mathrm{N}}(R^+C^+)$, and that $\boldsymbol{\mathrm{C}}(A^\mathrm{T})\subset\boldsymbol{\mathrm{C}}(R^+C^+)$ and $\boldsymbol{\mathrm{C}}(A^\mathrm{T})\supset\boldsymbol{\mathrm{C}}(R^+C^+)$ when $\mathrm{rank}(C)=\mathrm{rank}(R)=r$. Then, we will show that $R^+C^+$ perfectly inverts the action of $A$ on the same subspaces. 

\vspace{5pt}
\noindent
\textbf{Step 1\,:}\quad $\hbox{\textbf{N}}(A^\mathrm{T})\subset\hbox{\textbf{N}}(R^+C^+)$. Every vector $\boldsymbol{b_n}$ in the nullspace of $A^\mathrm{T}$ is also in the nullspace of $R^+C^+$.

\vspace{5pt}
\noindent
If $A^\mathrm{T}\boldsymbol{b_n}=0$, then $R^\mathrm{T}C^\mathrm{T}\boldsymbol{b_n}=0$. Since $R$ has full row rank and $C$ has full column rank, it follows that we must have $C^\mathrm{T}\boldsymbol{b_n} = 0$. Then we also have $(C^\mathrm{T}C)^{-1}C^\mathrm{T}\boldsymbol{b_n}=C^+\boldsymbol{b_n}=0$. Therefore, $\boldsymbol{b_n}$ is in the nullspace of $C^+$. Since $R$ is full row rank, matrix $R^+$ has full column rank, and we have $\hbox{\textbf{N}}(C^+)=\hbox{\textbf{N}}(R^+C^+)$. Every $\boldsymbol{b_n}$ in the nullspace of $A^\mathrm{T}$ is also in the nullspace of $R^+C^+$.

\vspace{5pt}
\noindent
\textbf{Step 2\,:}\quad $\hbox{\textbf{N}}(A^\mathrm{T})\supset\hbox{\textbf{N}}(R^+C^+)$. Every vector $\boldsymbol{b_n}$ in the nullspace of $R^+C^+$ is also in the nullspace of $A^\mathrm{T}$.

\vspace{5pt}
\noindent
Suppose that $R^+C^+\boldsymbol{b_n}=\boldsymbol{0}$. Since $RR^+=I_r$, we have $CRR^+C^+\boldsymbol{b_n} $ $= C(C^\mathrm{T}C)^{-1}C^\mathrm{T}\boldsymbol{b_n}= \boldsymbol{0}$. Since $C(C^\mathrm{T}C)^{-1}$ has full column rank, we see that $C^\mathrm{T}\boldsymbol{b_n}=\boldsymbol{0}$, so $\boldsymbol{b_n}$ is orthogonal to every column of $C$. Therefore $\boldsymbol{b_n}\in\hbox{\textbf{N}}(A^\mathrm{T})$.

\vspace{5pt}
\noindent
\textbf{Step 3\,:}\quad $\hbox{\textbf{C}}(A^\mathrm{T})\subset\hbox{\textbf{C}}(R^+C^+)$. Every vector $\boldsymbol{x}$ in the row space of $A$ is also in the column space of $R^+C^+$.

\vspace{5pt}
\noindent
We have $\boldsymbol{x} = A^\mathrm{T}\boldsymbol{y}=R^\mathrm{T}C^\mathrm{T}\boldsymbol{y}=R^\mathrm{T}[(RR^\mathrm{T})^{-1}RR^\mathrm{T}]C^\mathrm{T}\boldsymbol{y}=R^+R(R^\mathrm{T}C^\mathrm{T}\boldsymbol{y})=R^+R\boldsymbol{x}$. So by Lemma 1, we conclude that $\boldsymbol{x}\in\hbox{\textbf{C}}(R^+)=\hbox{\textbf{C}}(R^+C^+)$.

\vspace{5pt}
\noindent
\textbf{Step 4\,:}\quad $\hbox{\textbf{C}}(A^\mathrm{T})\supset\hbox{\textbf{C}}(R^+C^+)$. Every vector $\boldsymbol{x}$ in the column space of $R^+C^+$ is also in the row space of $A$.

\vspace{5pt}
\noindent
Since $C^T$ has full row rank, we have $\hbox{\textbf{C}}(R^TC^T) = \hbox{\textbf{C}}(R^T)$. Thus, $\hbox{\textbf{C}}(A^T) = \hbox{\textbf{C}}(R^T)$.
Similarly, since $C^+$ has full row rank, we have $\hbox{\textbf{C}}(R^+C^+) = \hbox{\textbf{C}}(R^+)$.
Then, by the property of the pseudoinverse, $\hbox{\textbf{C}}(R^+C^+) = \hbox{\textbf{C}}(R^+) = \hbox{\textbf{C}}(R^T) = \hbox{\textbf{C}}(A^T)$.

\vspace{5pt}
\noindent
\textbf{Step 5\,:}\quad The product of pseudoinverses $R^+C^+$ is equal to the pseudoinverse $A^+$.

\vspace{5pt}
\noindent
We have established that $R^+C^+$ has the same column space and nullspace as $A^+$, i.e., $\hbox{\textbf{C}}(R^+C^+) = \hbox{\textbf{C}}(A^T)$ and $\hbox{\textbf{N}}(R^+C^+) = \hbox{\textbf{N}}(A^T)$. Now, we inspect the projectors. We have $AR^+C^+ = CRR^+C^+  = CC^+$. This is the orthogonal projector onto $\hbox{\textbf{C}}(C)$. 
By the rank assumption and by Lemma \ref{lemma:C-deflation-N-inflation}, we also have
$\hbox{\textbf{C}}(A)=\hbox{\textbf{C}}(CR)=\hbox{\textbf{C}}(C)$.

Similarly, $R^+C^+A = R^+C^+CR = R^+R$, which is the orthogonal projector onto $\hbox{\textbf{C}}(R^T)$. It follows that $\hbox{\textbf{C}}(A^T)=\hbox{\textbf{C}}(R^TC^T)=\hbox{\textbf{C}}(R^T)$.

Since $AR^+C^+$ is the unique orthogonal projector onto $\hbox{\textbf{C}}(A)$ and $R^+C^+A$ is the unique orthogonal projector onto $\hbox{\textbf{C}}(A^T)$, we conclude $A^+=R^+C^+$. 

\end{proof}
\vspace{10pt}

Theorem~\ref{thm:rol-formula} gives only \textbf{sufficient} conditions for the reverse order law. That means we can have $(CR)^+ = R^+C^+$ despite $C$ not having full column rank and $R$ not having full row rank. To see that, let us consider
$$
C = \begin{bmatrix} 1 & 1 \\ 1 & 1 \end{bmatrix} \quad \text{and} \quad 
R = \begin{bmatrix} 1 & 0 \\ 1 & 0 \end{bmatrix}
$$
Clearly, $C$ does not have full column rank since its columns are linearly dependent. Similarly, $R$ does not have full row rank since its rows are linearly dependent. But in this case
$$
(CR)^+ = 
\frac{1}{4} 
\begin{bmatrix} 1 & 1 \\ 0 & 0 \end{bmatrix}
=
\frac{1}{2} \begin{bmatrix} 1 & 1 \\ 0 & 0 \end{bmatrix}
\frac{1}{4} \begin{bmatrix} 1 & 1 \\ 1 & 1 \end{bmatrix} =
R^+C^+.
$$

The \textbf{necessary and sufficient} conditions for $A^+ = (CR)^+ = R^+C^+$ are surprisingly complex. Greville formulated them in \cite{greville1966note}. The reverse order law for pseudoinverse, $(CR)^+=R^+C^+$, holds if and only if 
\begin{equation}\label{eq:greville conditions}
\boldsymbol{\mathrm{C}}(RR^TC^T) \subset\boldsymbol{\mathrm{C}}(C^T)
\quad\mathrm{and}\quad 
\boldsymbol{\mathrm{C}}(C^TCR) \subset \boldsymbol{\mathrm{C}}(R).
\end{equation} 
We provide an explanation and the proof outline in Figure~\ref{fig:rol-comm-diag}. 

Starting in the row space $\hbox{\textbf{C}}(A^T) = \hbox{\textbf{C}}(R^TC^T)$, we need to reach the column space $\hbox{\textbf{C}}(A)=\hbox{\textbf{C}}(CR)$. First, matrix $R$ maps $\boldsymbol{x}_r = R^TC^T\boldsymbol{y}$  to $R\boldsymbol{x}_r$ in the column space $\hbox{\textbf{C}}(R)$, which by (\ref{eq:greville conditions}) must also be in the row space $\hbox{\textbf{C}}(C^T)$. Then matrix $C$ can take $R\boldsymbol{x}_r$ to the column space $\hbox{\textbf{C}}(CR)$, so we have $CR\boldsymbol{x}_r = \boldsymbol{b}_c$. 

That map is next inverted by the pseudoinverse $A^+ = R^+C^+$. First, matrix $C^+$ maps $\boldsymbol{b}_c = CR\boldsymbol{x}_r$ into the row space $\hbox{\textbf{C}}(C^T) = \hbox{\textbf{C}}(C^+)$. But, by (\ref{eq:greville conditions}), that must also be in the column space $\hbox{\textbf{C}}(R)$. Indeed, in a special case when the left inverse is $C^+=(C^TC)^{-1}C^T$, we see that $(C^TC)^{-1}C^TCR\boldsymbol{x}_r = R\boldsymbol{x}_r\in\hbox{\textbf{C}}(R)$. Then $R^+$ can map $R\boldsymbol{x}_r$ back to the row space $\hbox{\textbf{C}}(R^TC^T)$.

In other words, the reverse order law demands that 
\begin{align}
C^+C(RA^T)=RA^T 
\quad\text{and}\quad
RR^+(C^TA)=C^TA.
\end{align}
It follows that $C$ and $R$ solve the two-sided projection equation (see \cite{karpowicz2021theory}):
\begin{align}
C^+C(RR^TC^TC)RR^+ = RR^TC^TC.
\end{align}
For the full-rank factorization of Theorem~\ref{thm:rol-formula} the equation holds trivially with $C^+C = I_r = RR^+$.

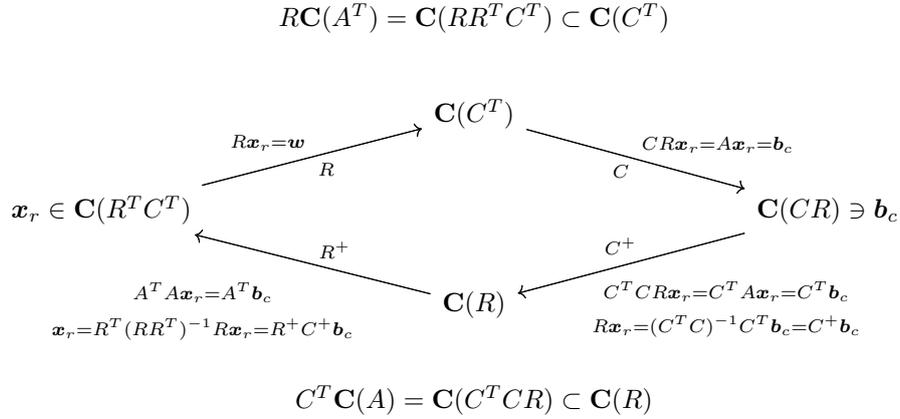
\begin{figure}[tb]
\centering
\[\begin{tikzcd}
& {R\boldsymbol{\mathrm{C}}(A^T) = \boldsymbol{\mathrm{C}}(RR^TC^T) \subset\boldsymbol{\mathrm{C}}(C^T)} \\
& {\hbox{\textbf{C}}(C^T)} \\
{\boldsymbol{x}_r\in\hbox{\textbf{C}}(R^TC^T)} && {\hbox{\textbf{C}}(CR)\ni\boldsymbol{b}_c} \\
& {\hbox{\textbf{C}}(R)} \\
& {C^T\boldsymbol{\mathrm{C}}(A)=\boldsymbol{\mathrm{C}}(C^TCR) \subset \boldsymbol{\mathrm{C}}(R)}
\arrow["{{R\boldsymbol{x}_r=\boldsymbol{w}}}"{pos=0.5}, from=3-1, to=2-2]
\arrow["{CR\boldsymbol{x}_r=A\boldsymbol{x}_r=\boldsymbol{b}_c}"{pos=0.5}, from=2-2, to=3-3]
\arrow["\substack{A^TA\boldsymbol{x}_r=A^T\boldsymbol{b}_c\\[.5em]\boldsymbol{x}_r=R^T(RR^T)^{-1}R\boldsymbol{x}_r=R^+C^+\boldsymbol{b}_c}"{pos=0.3}, from=4-2, to=3-1]
\arrow[""{name=0, anchor=center, inner sep=0}, "\substack{C^TCR\boldsymbol{x}_r=C^TA\boldsymbol{x}_r=C^T\boldsymbol{b}_c\\[1ex]R\boldsymbol{x}_r=(C^TC)^{-1}C^T\boldsymbol{b}_c=C^+\boldsymbol{b}_c}"{pos=0.7}, from=3-3, to=4-2]
\arrow["{R^+}"', from=4-2, to=3-1]
\arrow["R"', from=3-1, to=2-2]
\arrow["C"', from=2-2, to=3-3]
\arrow["{C^+}"'{pos=0.45}, from=3-3, to=4-2]
\end{tikzcd}\]
\caption{Given a full-rank decomposition $A=CR$, matrix $R$ maps $\hbox{\textbf{C}}(A^T)$ into $\hbox{\textbf{C}}(C^T)$ and matrix $C^T$ maps $\hbox{\textbf{C}}(A)$ into $\hbox{\textbf{C}}(R)$ if and only if $(CR)^+=R^+C^+$. }
\label{fig:rol-comm-diag}
\end{figure}

\section{The corrected formula}

It is not generally true that $A^+=R^+C^+$, so we should derive the correct formula presented in \cite{greville1966note}. As before, let us study the maps of subspaces in Figure~\ref{fig:corrected-formula-diag}. This section provides detailed illustration of how the correct formula must work. For the rigorous proof, see e.g. \cite{schott2016matrix}.

To reach the column space $\hbox{\textbf{C}}(CR)$ starting from the row space $\hbox{\textbf{C}}(R^TC^T)$, it must allow us to go through the intersection of $\hbox{\textbf{C}}(R)$ and $\hbox{\textbf{C}}(C^T)$. From there, $C$ maps into the column space $\hbox{\textbf{C}}(C)$. To go back, we need to invert the map $A = CR$. 

First, starting in $\hbox{\textbf{C}}(C)$ we must reach $\hbox{\textbf{C}}(C^T)$ intersecting $\hbox{\textbf{C}}(R)$. That is ensured by the orthogonal projection $RR^+$, which generates
$$
RR^+ \hbox{\textbf{C}}(C^T) = \hbox{\textbf{C}}(RR^+C^T)= \hbox{\textbf{C}}((CRR^+)^T) = \hbox{\textbf{C}}((CRR^+)^+)
$$
by the first principles. Then, to map $R\boldsymbol{x}_r$ in $\hbox{\textbf{C}}(C^T)$ back onto the row space $\hbox{\textbf{C}}(R^T)$, we need $(C^+CR)^+$ because
$$
R^T\hbox{\textbf{C}}(C^T) = \hbox{\textbf{C}}(R^T(C^+C)) = \hbox{\textbf{C}}((C^+CR)^T) = \hbox{\textbf{C}}((C^+CR)^+).
$$
As a result, $\hbox{\textbf{C}}(R^TC^T) = \hbox{\textbf{C}}(A^+)$ in Figure~\ref{fig:corrected-formula-diag}.

\begin{figure}[!ht]
\centering
\[\begin{tikzcd}
& {\hbox{\textbf{C}}((CRR^+)^+) = RR^+ \hbox{\textbf{C}}(C^T)} \\[10pt]
{
{\boldsymbol{x}_r\in\hbox{\textbf{C}}(R^TC^T)}} & {\hbox{\textbf{C}}(R)\cap\hbox{\textbf{C}}(C^T)} & {\hbox{\textbf{C}}(CR)\ni\boldsymbol{b}_c} \\
& {\hbox{\textbf{C}}(A^T)=\hbox{\textbf{C}}((C^+CR)^+)}
\arrow["{{R\boldsymbol{x}_r = (CRR^+)^+\boldsymbol{b}_c}}"', curve={height=18pt}, from=2-3, to=2-2]
\arrow["{{\boldsymbol{x}_r=(C^+CR)^+R\boldsymbol{x}_r}}"', curve={height=18pt}, from=2-2, to=2-1]
\end{tikzcd}\]
\caption{The pseudoinverse $A^+=(CR)^+=(C^+CR)^+(CRR^+)^+$ decomposes into the product of the pseudoinverse of $R$ projected on the row space of $C^T$ and the pseudoinverse of $C^T$ projected on the column space of $R$.}
\label{fig:corrected-formula-diag}
\end{figure}

Then, for any $\boldsymbol{b}_n$ in the nullspace of $R^TC^T$ we have $RR^+C^T\boldsymbol{b}_n = \boldsymbol{0}$. That shows that $\boldsymbol{b}_n$ is in the nullspace 
$$
\hbox{\textbf{N}}(RR^+C^T) = \hbox{\textbf{N}}((CRR^+)^+)\subset \hbox{\textbf{N}}((C^+CR)^+(CRR^+)^+) = \hbox{\textbf{N}}(A^+).
$$
We now have:
\begin{gather*}
C = \left[\begin{array}{rrr}1&0\\\end{array}\right] \quad 
R = \left[\begin{array}{rrr}1\\1\\\end{array}\right]\quad 
C^+C = \left[\begin{array}{rrr}1&0\\0&0\\\end{array}\right] \quad
RR^+ = \dfrac{1}{2}\left[\begin{array}{rrr}1&1\\1&1\\\end{array}\right]
\\[1em]
(C^+CR)^+(CRR^+)^+ = 
\left[\begin{array}{rrr}1&0\\\end{array}\right]
\left[\begin{array}{rrr}1\\1\\\end{array}\right]
= (CR)^+.
\end{gather*}
Therefore, we have reached the following theorem presenting the correct formula.

\vspace{10pt}
\begin{theorem}\label{thm:correct-formula}
The pseudoinverse $A^+$ of a product $A=CR$ is given by the product
\begin{equation}\label{eq:corrected}
A^+ = (CR)^+ = (C^+CR)^+(CRR^+)^+.
\end{equation}
\end{theorem}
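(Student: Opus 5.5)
Set $R_1 = C^+CR$ and $C_1 = CRR^+$, so that the formula to prove reads $A^+ = R_1^+C_1^+$. The plan has three parts. First, reduce to a product where Greville's conditions (\ref{eq:greville conditions}) hold. Then check those two inclusions directly. Finally, identify $A$ with $C_1R_1$.

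The key observation is that $C_1R_1 = CRR^+C^+CR$, and this is not $A$ in general. So I would instead write
\[
A = CR = C(C^+C)(RR^+)R = C\,(C^+CRR^+)\,R .
\]
Splitting this as $A = (CC^+CRR^+)(R) = C_1 R$ does not help directly. The cleaner route is to use the four Penrose identities (\ref{eq:Penrose}) for $X := R_1^+C_1^+$ and check them one at a time, with $P := C^+C$ and $Q := RR^+$ denoting the orthogonal projectors onto $\colspace{C^T}$ and $\colspace{R}$.

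The first step collects the basic relations. We have $A = CPR = CQR = CPQR$, and also
\[
AX = CR\,R_1^+C_1^+ = C\,(PR)(PR)^+\,C_1^+ ,
\]
using $CR = CPR$. The matrix $(PR)(PR)^+$ is the orthogonal projector onto $\colspace{PR}$. Since $C_1^+$ has column space $\colspace{C_1^T} = \colspace{QC^T}$, I need the projector to act as the identity on $\colspace{QC^T}$, composed appropriately. The intended relations are $\colspace{R_1} = \colspace{PR} = P\colspace{R}$ and $\colspace{C_1^T} = Q\colspace{C^T}$. A cleaner formulation is to show $XA = R_1^+C_1^+C_1R = R_1^+R_1$, hence symmetric. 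For this, note $C_1^+C_1$ projects onto $\colspace{C_1^T} = \colspace{QC^T}$. We need $C_1^+C_1 R = C_1^+C_1 QR$, and here the point is that $\colspace{QC^TCQ}$ and $QP$ interplay. Concretely, I would write $A = C_1 R$ (true, since $CRR^+R = CR$) and $A = C R_1$ (true, since $CC^+CR = CR$). Then
\[
XA = R_1^+C_1^+C_1R \quad\text{and}\quad AX = CR_1R_1^+C_1^+ .
\]

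The main obstacle, and the central lemma, is to show $C_1^+C_1 R = R_1^+{}^{\!+}$-compatible, i.e.\ the identity $R_1^+\,C_1^+C_1\,R = R_1^+R_1$, together with its mirror $C\,R_1R_1^+\,C_1^+ = C_1C_1^+$. I would prove it from the subspaces:
\[
\nullspace{R_1^+} = \nullspace{R_1^T} = \nullspace{R^TP}.
\]
Write $R = C_1^+C_1R + (I - C_1^+C_1)R$. The second term has columns in $\nullspace{C_1} = \nullspace{CQ}$. For $z$ in $\nullspace{CQ}$ with $z \in \colspace{R}$, we get $Qz = z$, so $Cz = 0$, so $Pz = 0$, so $z \in \nullspace{R^TP}$ and $R_1^+ z = 0$. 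Hence $R_1^+C_1^+C_1R = R_1^+R$. Moreover $R_1^+R = R_1^+PR = R_1^+R_1$, because $(I-P)R$ lies in $\nullspace{P} \subset \nullspace{R^TP}$. The mirror statement follows by transposition symmetry, swapping $C \leftrightarrow R^T$.

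With this, $XA = R_1^+R_1$ and $AX = C_1C_1^+$ are both symmetric projectors, which gives the third and fourth Penrose identities. For the first, $AXA = C_1C_1^+C_1R = C_1R = A$. For the second, $XAX = R_1^+R_1R_1^+C_1^+ = X$. Uniqueness of the matrix satisfying (\ref{eq:Penrose}) then gives $A^+ = (C^+CR)^+(CRR^+)^+$.
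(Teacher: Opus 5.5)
Your final argument is correct, but it takes a different route from the paper. The paper never checks the Penrose identities; it says it deliberately avoids them. Instead it motivates the formula by tracing subspaces: $(CRR^+)^+$ sends $\colspace{A}$ into $RR^+\colspace{C^T}$, the second factor satisfies $\colspace{(C^+CR)^+}=\colspace{R^TC^+C}=\colspace{A^T}$, and $\nullspace{A^T}\subset\nullspace{RR^+C^T}=\nullspace{(CRR^+)^+}\subset\nullspace{X}$. It then checks the $1\times 1$ example and defers the rigorous proof to the literature. That picture shows where $X=R_1^+C_1^+$ lands and what it kills, but not that $X$ actually inverts $A$ on the row space. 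Your lemma $R_1^+C_1^+C_1R=R_1^+R_1$, together with its transpose $CR_1R_1^+C_1^+=C_1C_1^+$, supplies exactly that missing step. It shows that $XA$ is the orthogonal projector onto $\colspace{R^TP}=\colspace{A^T}$ and $AX$ is the one onto $\colspace{CQ}=\colspace{A}$. With $A=CR_1=C_1R$, each of the four identities then follows in a line. So yours is a complete proof where the paper offers an illustration, while the paper's version makes the geometric role of each factor visible, which yours recovers only implicitly.

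Two repairs to the write-up.

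First, you apply ``$z\in\nullspace{CQ}$ and $z\in\colspace{R}$ imply $R_1^+z=0$'' to the columns of $(I-C_1^+C_1)R$, but you never say why those columns lie in $\colspace{R}$. The restriction is essential. For $C=[\,1\ \ 0\,]$ and $R=[\,1\ \ 1\,]^T$, the vector $z=(1,-1)^T$ lies in $\nullspace{R^T}\subset\nullspace{CQ}$, yet $R_1^+z=1$. The fix is one line: $\colspace{C_1^+C_1}=\colspace{C_1^T}=\colspace{QC^T}\subset\colspace{R}$. Hence both $R$ and $C_1^+C_1R$, and so their difference, have columns in $\colspace{R}$.

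Second, delete the opening plan through Greville's conditions and ``identify $A$ with $C_1R_1$.'' As you note yourself, $C_1R_1=CRR^+C^+CR\neq A$ in general; in the same example $C_1R_1=1/2$ while $A=1$. Also replace the garbled sentence ending in ``-compatible'' with the clean statement of the lemma.
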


\section{The generalized formula and interacting subspaces}

Factorization $A=CR$ expresses the classical elimination by row operations \cite{strang2022three,strang2023introduction}. It fills $C$ with the first $r$ independent columns of $A$. Those columns are a basis for the column space of $A$. Then column $j$ of $R$ specifies the combination of columns of $C$ that produces column $j$ of $A$. This example has column 3 = column 1 + column 2:
$$
A=\left[\begin{array}{rrr}1&4&5\\2&3&5\\\end{array}\right]=\left[\begin{array}{rr}1&4\\2&3\\\end{array}\right]\left[\begin{array}{rrr}1&0&1\\0&1&1\\\end{array}\right]=CR.
$$
That matrix $R$ contains the nonzero rows of the \textbf{reduced row echelon form} of $A$ and reveals the nullspace $\hbox{\textbf{N}}(A)=\hbox{\textbf{N}}(R)$. We will use that first-principles perspective to investigate generalized inverse maps and their fundamental subspaces. A good place to start is a general solution to linear equations. 

\subsection{General solution to $A^+\boldsymbol{b} = \boldsymbol{x}$}

As a full-rank decomposition, $A=CR$ leads to the following useful formula \cite{greville1960someapps,strang2023introduction}:
\begin{equation}
A^+=R^+C^+=R^\mathrm{T}(RR^\mathrm{T})^{-1}(C^\mathrm{T}C)^{-1}C^\mathrm{T}=R^\mathrm{T}(C^\mathrm{T}AR^\mathrm{T})^{-1}C^\mathrm{T}.
\label{eq:pinverse-formula}
\end{equation}
After a short inspection we can notice that the formula shows the ranks of $A^+$ and $A$ are equal. With equal ranks, we then have $\hbox{\textbf{C}}(A^+A)=\hbox{\textbf{C}}(A^+)$ by Lemma \ref{lemma:C-deflation-N-inflation}. See also Theorem 1.4.2 in \cite{ben2003generalized} or \cite{bjerhammar}.

Consider now the following equation 
$$
A^+\boldsymbol{b} = \boldsymbol{x}.
$$
For any given $\boldsymbol{x}$ and arbitrary $\boldsymbol{w}$, the general solution $\boldsymbol{b}$ to that equation is equal to
$$
\boldsymbol{b} = A\boldsymbol{x}+(I_m-AA^+)\boldsymbol{w}.
$$
That is easy to see, since $A^+$ satisfies the first and the second Penrose identity. For any $\boldsymbol{b}$ in $\hbox{\textbf{C}}(A)$ we recover the corresponding $\boldsymbol{x} = A^+\boldsymbol{b} = A^+A\boldsymbol{x} + (A-A^+AA^+)\boldsymbol{w} = A^+A\boldsymbol{x}$ in the row space $\hbox{\textbf{C}}(A^T)$. The map $A$ from the row space $\hbox{\textbf{C}}(A^T)$ to the column space $\hbox{\textbf{C}}(A)$ is inverted by the pseudoinverse $A^+$.  That is a wonderful and powerful property that holds when both the first and the second Penrose identity hold.

To see what happens when that is not the case, consider the following rank-deficient 
$$
A=
\left[\begin{array}{rrr}1&0\\0&0\\0&0\\\end{array}\right]
=
\left[\begin{array}{rrr}1\\0\\0\\\end{array}\right]
\left[\begin{array}{rrr}1&0\\\end{array}\right]
=C_0R_0.
$$
When we complete $C_0$ and $R_0$ to get square invertible matrices $[C_0, C_1]$ and $[R_0,  R_1]^T$, then the generalized factorization $A = CR$ yields
$$
A= 
\left[\begin{array}{rrr}C_0 & C_1\\\end{array}\right]
\left[\begin{array}{rrr}I_r & 0\\0& 0\\\end{array}\right]
\left[\begin{array}{rrr}R_0 \\R_1\\\end{array}\right].
$$
Given that decomposition of $A$, the first Penrose identity holds for any matrix defined by the following \textbf{generalized formula} \cite{marsaglia1974equalities}: 
\begin{align}\label{eq:geninv formula 1}
A^g = \left[\begin{array}{rrr}R_0 \\R_1\\\end{array}\right]^{-1}
\left[\begin{array}{rrr}I_r & Z_{12}\\Z_{21}& Z_{22}\\\end{array}\right]
\left[\begin{array}{rrr}C_0 & C_1\\\end{array}\right]^{-1}.
\end{align}
Matrix $A^g$ is a \textbf{generalized inverse} or \textbf{1-inverse} (since the first identity holds) of $A$. 

The second Penrose identity introduces additional constraints on how $A^g$ is constructed. We demand more, so additional constraints are what we should expect. Since
$$
A^gAA^g = \left[\begin{array}{rrr}R_0 \\R_1\\\end{array}\right]^{-1}
\left[\begin{array}{rrr}I_r & Z_{12}\\Z_{21}& Z_{21}Z_{12}\\\end{array}\right]
\left[\begin{array}{rrr}C_0 & C_1\\\end{array}\right]^{-1},
$$
the equation $A^gAA^g = A^g$ holds only when $Z_{22} = Z_{21}Z_{12}$. But then, matrix $A$ and its $1$-inverse $A^g$ have equal ranks. Therefore, any $1$-inverse $A^g$ of $A$ of the same rank is also $\{1,2\}$-inverse of $A$. The pseudoinverse $A^+$ of $A$ is a special example, as we have already seen.

\subsection{Generalized inverse fails to invert}

When $A$ and $A^g$ \textbf{do not }have equal ranks, the interactions of their fundamental subspaces become nontrivial. To better understand some of these interactions, we will now study the reduced row echelon forms of $A$ and $A^g$ and inspect how their nullspaces $\hbox{\textbf{N}}(A^T)$ and $\hbox{\textbf{N}}(A^g)$ interact. From there, we will give an illustration of the bigger picture.

Consider the following example:
$$
A^T=
\left[\begin{array}{rrr}1&0&0\\0&0&0\\\end{array}\right]
=\left[\begin{array}{rrr}1\\0\\\end{array}\right]
\left[\begin{array}{rrr}1&0&0\\\end{array}\right] = C_2R_2.
$$
Its nullspace $\hbox{\textbf{N}}(A^T) = \hbox{\textbf{N}}(R_2)$ is spanned by $(0,1,0)^T$ and $(0,0,1)^T$, and its dimension is $m-\mathrm{rank}(A^T) = 3-1 = 2$ (see Figure \ref{fig:1}). We can use formula (\ref{eq:geninv formula 1}) to design 
$$
A^g=\left[\begin{array}{rrr}1&3&2\\3&3&2\\\end{array}\right]
=
\left[\begin{array}{rrr}1&3\\3&3\\\end{array}\right]
\left[\begin{array}{rrr}1&0&0\\0&1&2/3\\\end{array}\right] = C_3R_3.
$$
The first Penrose identity holds, so $AA^gA = A$, but the second one does not. That matrix $A^g$ is only $1$-inverse of $A$ with the nullspace $\hbox{\textbf{N}}(A^g) = \hbox{\textbf{N}}(R_3)$ spanned by $(0,1,-3/2)^T$ of dimension $m-\mathrm{rank}(A^g) = 3-2 = 1$. 

The provided example demonstrates the case in which $\mathrm{rank}(A) =\mathrm{rank}(A^T) < \mathrm{rank}(A^g)$ and $AA^gA=A$. For this specific example, we observe the following subspace inclusions:
\begin{align*}
\hbox{\textbf{C}}(A^T)\subset\hbox{\textbf{C}}(A^g)
\quad&\text{and}\quad
\hbox{\textbf{C}}(A)\subset\hbox{\textbf{C}}((A^g)^T)
\\
\hbox{\textbf{N}}((A^g)^T)\subset\hbox{\textbf{N}}(A)
\quad&\text{and}\quad
\hbox{\textbf{N}}(A^g)\subset\hbox{\textbf{N}}(A^T).
\end{align*}

Indeed, we see that:
$$
\{(a,0)\colon a\in \mathbb{R}\} = \hbox{\textbf{C}}(A^T) 
\subset
\hbox{\textbf{C}}(A^g) = \mathbf{R}^2,
$$
so the row space of $A$ resides within the column space of $A^g$. Similarly, the column space of $A$ is a subspace of the row space of $A^g$, 
$$
\{(a,0,0)^T\colon a\in \mathbb{R}\} =\hbox{\textbf{C}}(A) 
\subset 
\hbox{\textbf{C}}((A^g)^T) = \mathrm{span}\{(1,3,2)^T,(3,3,2)^T\}.
$$
The nullspace relations are reversed,
$$
\{0\} = \hbox{\textbf{N}}((A^g)^T) \subset \hbox{\textbf{N}}(A) = \{(0,b)\colon b\in\mathbb{R}\}\subset\mathbb{R}^2
$$
and
$$
\{(0,b,-3/2b)^T\colon b\in \mathbb{R}\}=\hbox{\textbf{N}}(A^g) 
\subset 
\hbox{\textbf{N}}(A^T) = \{(0,b_2,b_3)^T\colon b_2,b_3\in\mathbb{R}\}.
$$

However, that is not the whole story, because there are also nonempty intersections of the subspaces. First, there is the case of $\hbox{\textbf{N}}(A)\cap\hbox{\textbf{C}}(A^g) = \{\boldsymbol{0}\}$.  Second, there is a much more interesting case of $\hbox{\textbf{N}}(A^T)\cap\hbox{\textbf{C}}((A^g)^T)$.

As before, for the sake of illustration, let us study an example presented in Figure \ref{fig:1-inv example}. 

Consider $\boldsymbol{y} = (0,3,2)^T$, which can be expressed as $(A^g)^T (\frac{3}{2},-\frac{1}{2})^T = \frac{3}{2} (1,3,2)^T - \frac{1}{2} (3,3,2)^T$ in the row space $\hbox{\textbf{C}}((A^g)^T)$. Since $A^T\boldsymbol{y} = \boldsymbol{0}$, this vector also belongs to the nullspace $\hbox{\textbf{N}}(A^T)$. 

When we apply $A^g$ to $\boldsymbol{y}$, we reach $\boldsymbol{x} = A^g\boldsymbol{y} = (13,13)^T$ in the column space $\hbox{\textbf{C}}(A^g)$. However, that vector is neither in the row space $\hbox{\textbf{C}}(A^T)$ nor in the nullspace $\hbox{\textbf{N}}(A)$. These two subspaces contain orthogonal projections of $\boldsymbol{x}$, denoted $\boldsymbol{x}_r$ and $\boldsymbol{x}_n$. 

Matrix $A$ cannot reconstruct the original $\boldsymbol{y}$ in $\hbox{\textbf{N}}(A^T)$ from $\boldsymbol{x} = A^g\boldsymbol{y}$. Instead, it produces $\bar{\boldsymbol{y}} = A\boldsymbol{x} = AG\boldsymbol{y} = (13,0,0)$ in its own column space $\hbox{\textbf{C}}(A)$, which also belongs to the row space $\hbox{\textbf{C}}((A^g)^T)$. For that $\bar{\boldsymbol{y}}$, we do find $\bar{\boldsymbol{x}} = A^gAA^g\boldsymbol{y} = (13,39)^T$ in the column space $\hbox{\textbf{C}}(A^g)$ that matrix $A^g$ can finally recover. Since $A^g$ is $1$-inverse of $A$, we have $\bar{\boldsymbol{y}} = AGAG\boldsymbol{y} = AG\boldsymbol{y} = (13,0,0)$, and $\bar{\boldsymbol{x}} = A^gAA^g\boldsymbol{y} = (13,39)^T$ again.

To get $\boldsymbol{x}$ from 
$$
\boldsymbol{b} = A\boldsymbol{x}+(I_m-AG)\boldsymbol{w},
$$
it is necessary to have $GA\boldsymbol{x}$ in the row space $\hbox{\textbf{C}}(A^T)$ and $(I_m-AG)\boldsymbol{w}$ in the nullspace $\hbox{\textbf{N}}(A^g)$. In the example above, neither condition is satisfied. Vector $\boldsymbol{x}$ is not in the row space of $A$ and the second Penrose identity does not hold, with vector $\boldsymbol{y}$ in $\hbox{\textbf{N}}(A^T)$ but not in $\hbox{\textbf{N}}(A^g)$. Also, $AG$ and $GA$ are not identity matrices. 

That interesting example brings us to the following conclusion. When $A^g$ is only $1$-inverse of $A$ and not its $\{1,2\}$-inverse, it fails to invert the mapping from the row space $\hbox{\textbf{C}}(A^T)$ to the column space $\hbox{\textbf{C}}(A)$. Some vectors in $\hbox{\textbf{N}}(A^T)$ intersecting with $\hbox{\textbf{C}}((A^g)^T)$ are mapped by $A^g$ to nonzero vectors in $\hbox{\textbf{C}}(A^g)$. Some vectors in $\hbox{\textbf{C}}((A^g)^T)$ are mapped by $A^g$ to vectors outside both $\hbox{\textbf{C}}(A^T)$ and $\hbox{\textbf{N}}(A)$, making them irrecoverable by $A$. Only vectors in $\hbox{\textbf{C}}(A)$ can be properly recovered using the 1-inverse $A^g$ of $A$.

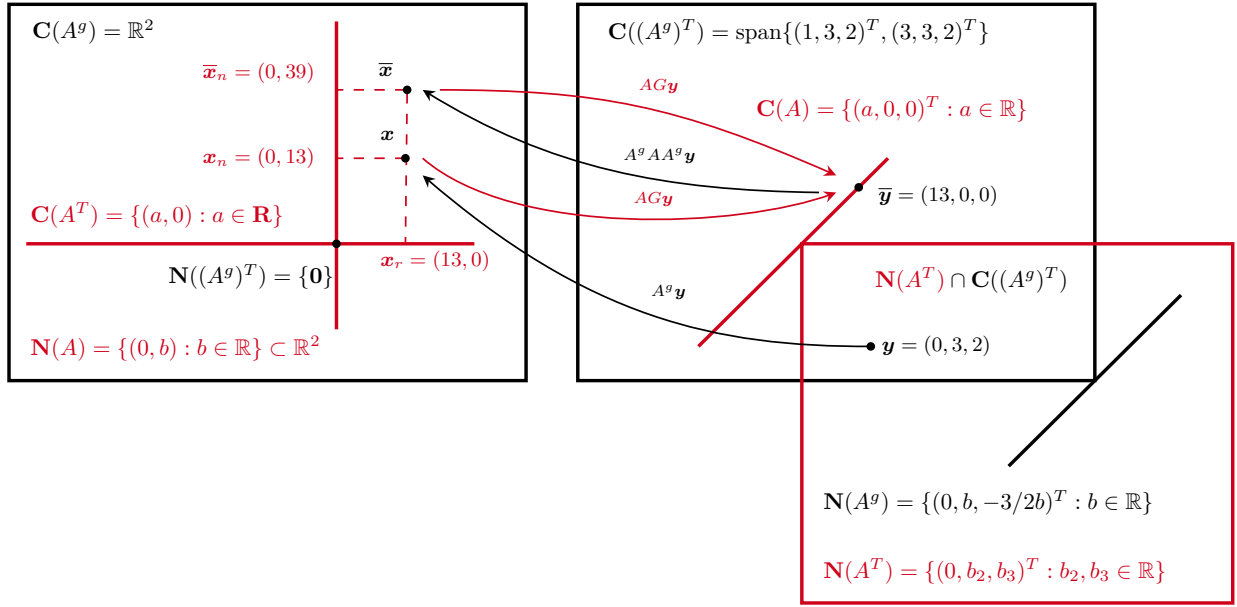
\begin{figure}[tb]
\centering    
\resizebox{\textwidth}{!}{

\tikzset{every picture/.style={line width=0.75pt}} 

\begin{tikzpicture}[x=0.75pt,y=0.75pt,yscale=-1,xscale=1]

\draw  [line width=1.5]  (30,40) -- (330,40) -- (330,260) -- (30,260) -- cycle ;
\draw  [line width=1.5]  (360,40) -- (660,40) -- (660,260) -- (360,260) -- cycle ;
\draw  [color={rgb, 255:red, 208; green, 2; blue, 27 }  ,draw opacity=1 ][line width=1.5]  (490,180) -- (740,180) -- (740,390) -- (490,390) -- cycle ;
\draw [color={rgb, 255:red, 208; green, 2; blue, 27 }  ,draw opacity=1 ][line width=1.5]    (540,130) -- (430,240) ;
\draw [color={rgb, 255:red, 208; green, 2; blue, 27 }  ,draw opacity=1 ][line width=1.5]    (300,180) -- (40,180) ;
\draw [color={rgb, 255:red, 208; green, 2; blue, 27 }  ,draw opacity=1 ][line width=1.5]    (220,230) -- (220,50) ;
\draw [color={rgb, 255:red, 0; green, 0; blue, 0 }  ,draw opacity=1 ][line width=1.5]    (710,210) -- (610,310) ;
\draw  [fill={rgb, 255:red, 0; green, 0; blue, 0 }  ,fill opacity=1 ] (218,180) .. controls (218,178.9) and (218.9,178) .. (220,178) .. controls (221.1,178) and (222,178.9) .. (222,180) .. controls (222,181.1) and (221.1,182) .. (220,182) .. controls (218.9,182) and (218,181.1) .. (218,180) -- cycle ;
\draw    (530,240) .. controls (439.45,240.25) and (365.74,223.42) .. (271.42,141.24) ;
\draw [shift={(270,140)}, rotate = 41.23] [fill={rgb, 255:red, 0; green, 0; blue, 0 }  ][line width=0.08]  [draw opacity=0] (7.14,-3.43) -- (0,0) -- (7.14,3.43) -- (4.74,0) -- cycle    ;
\draw [color={rgb, 255:red, 208; green, 2; blue, 27 }  ,draw opacity=1 ] [dash pattern={on 4.5pt off 4.5pt}]  (261,92) -- (260,180) ;
\draw [color={rgb, 255:red, 208; green, 2; blue, 27 }  ,draw opacity=1 ] [dash pattern={on 4.5pt off 4.5pt}]  (260,130) -- (240.5,130) -- (220,130) ;
\draw  [fill={rgb, 255:red, 0; green, 0; blue, 0 }  ,fill opacity=1 ] (258,130) .. controls (258,128.9) and (258.9,128) .. (260,128) .. controls (261.1,128) and (262,128.9) .. (262,130) .. controls (262,131.1) and (261.1,132) .. (260,132) .. controls (258.9,132) and (258,131.1) .. (258,130) -- cycle ;
\draw  [fill={rgb, 255:red, 0; green, 0; blue, 0 }  ,fill opacity=1 ] (521,147) .. controls (521,145.9) and (521.9,145) .. (523,145) .. controls (524.1,145) and (525,145.9) .. (525,147) .. controls (525,148.1) and (524.1,149) .. (523,149) .. controls (521.9,149) and (521,148.1) .. (521,147) -- cycle ;
\draw [color={rgb, 255:red, 208; green, 2; blue, 27 }  ,draw opacity=1 ]   (270,130) .. controls (323.19,176.54) and (453.51,170.93) .. (507.58,150.92) ;
\draw [shift={(510,150)}, rotate = 158.43] [fill={rgb, 255:red, 208; green, 2; blue, 27 }  ,fill opacity=1 ][line width=0.08]  [draw opacity=0] (7.14,-3.43) -- (0,0) -- (7.14,3.43) -- (4.74,0) -- cycle    ;
\draw [color={rgb, 255:red, 208; green, 2; blue, 27 }  ,draw opacity=1 ] [dash pattern={on 4.5pt off 4.5pt}]  (260,90) -- (240.5,90) -- (220,90) ;
\draw  [fill={rgb, 255:red, 0; green, 0; blue, 0 }  ,fill opacity=1 ] (259,90) .. controls (259,88.9) and (259.9,88) .. (261,88) .. controls (262.1,88) and (263,88.9) .. (263,90) .. controls (263,91.1) and (262.1,92) .. (261,92) .. controls (259.9,92) and (259,91.1) .. (259,90) -- cycle ;
\draw [color={rgb, 255:red, 0; green, 0; blue, 0 }  ,draw opacity=1 ]   (500,150) .. controls (410.9,148.76) and (344.34,136.5) .. (272.19,91.38) ;
\draw [shift={(270,90)}, rotate = 32.36] [fill={rgb, 255:red, 0; green, 0; blue, 0 }  ,fill opacity=1 ][line width=0.08]  [draw opacity=0] (7.14,-3.43) -- (0,0) -- (7.14,3.43) -- (4.74,0) -- cycle    ;
\draw [color={rgb, 255:red, 208; green, 2; blue, 27 }  ,draw opacity=1 ]   (507.22,138.77) .. controls (416.12,98.58) and (372.07,90.25) .. (280,90) ;
\draw [shift={(510,140)}, rotate = 203.92] [fill={rgb, 255:red, 208; green, 2; blue, 27 }  ,fill opacity=1 ][line width=0.08]  [draw opacity=0] (7.14,-3.43) -- (0,0) -- (7.14,3.43) -- (4.74,0) -- cycle    ;
\draw  [fill={rgb, 255:red, 0; green, 0; blue, 0 }  ,fill opacity=1 ] (528,240) .. controls (528,238.9) and (528.9,238) .. (530,238) .. controls (531.1,238) and (532,238.9) .. (532,240) .. controls (532,241.1) and (531.1,242) .. (530,242) .. controls (528.9,242) and (528,241.1) .. (528,240) -- cycle ;

\draw (41,154.4) node [anchor=north west][inner sep=0.75pt]  [font=\normalsize,color={rgb, 255:red, 208; green, 2; blue, 27 }  ,opacity=1 ]  {$\mathbf{C} (A^{T} )=\{(a,0):a\in \mathbf{R} \}$};
\draw (42,47.4) node [anchor=north west][inner sep=0.75pt]  [font=\normalsize]  {$\mathbf{C} (A^g)=\mathbb{R}^{2}$};
\draw (41,232.4) node [anchor=north west][inner sep=0.75pt]  [font=\normalsize,color={rgb, 255:red, 208; green, 2; blue, 27 }  ,opacity=1 ]  {$\mathbf{N} (A)=\{(0,b):b\in \mathbb{R} \}\subset \mathbb{R}^{2}$};
\draw (376,47.4) node [anchor=north west][inner sep=0.75pt]  [font=\normalsize]  {$\mathbf{C} ((A^g)^T )=\mathrm{span} \{(1,3,2)^{T} ,(3,3,2)^{T} \}$};
\draw (462,92.4) node [anchor=north west][inner sep=0.75pt]  [font=\normalsize,color={rgb, 255:red, 208; green, 2; blue, 27 }  ,opacity=1 ]  {$\mathbf{C} (A)=\{(a,0,0)^{T} :a\in \mathbb{R} \}$};
\draw (501,322.4) node [anchor=north west][inner sep=0.75pt]  [font=\normalsize]  {$\mathbf{N} (A^g)=\{(0,b,-3/2b)^{T} :b\in \mathbb{R} \}$};
\draw (501,362.4) node [anchor=north west][inner sep=0.75pt]  [font=\normalsize,color={rgb, 255:red, 208; green, 2; blue, 27 }  ,opacity=1 ]  {$\mathbf{N} (A^{T} )=\{(0,b_{2} ,b_{3} )^{T} :b_{2} ,b_{3} \in \mathbb{R} \}$};
\draw (531,192.4) node [anchor=north west][inner sep=0.75pt]  [font=\normalsize]  {$\mathbf{\textcolor[rgb]{0.82,0.01,0.11}{N}}\textcolor[rgb]{0.82,0.01,0.11}{(A}\textcolor[rgb]{0.82,0.01,0.11}{^{T}}\textcolor[rgb]{0.82,0.01,0.11}{)} \cap \mathbf{C} ((A^g)^T )$};
\draw (535,232.4) node [anchor=north west][inner sep=0.75pt]  [font=\small]  {$\boldsymbol{y} =(0,3,2)$};
\draw (121,190.4) node [anchor=north west][inner sep=0.75pt]    {$\mathbf{N} ((A^g)^T )=\{\mathbf{0} \}$};
\draw (244,112.4) node [anchor=north west][inner sep=0.75pt]  [font=\small]  {$\boldsymbol{x}$};
\draw (141,122.4) node [anchor=north west][inner sep=0.75pt]  [font=\small,color={rgb, 255:red, 208; green, 2; blue, 27 }  ,opacity=1 ]  {$\boldsymbol{x}_{n} =(0,13)$};
\draw (244,182.4) node [anchor=north west][inner sep=0.75pt]  [font=\small,color={rgb, 255:red, 208; green, 2; blue, 27 }  ,opacity=1 ]  {$\boldsymbol{x}_{r} =(13,0)$};
\draw (243,72.4) node [anchor=north west][inner sep=0.75pt]  [font=\small]  {$\overline{\boldsymbol{x}}$};
\draw (141,72.4) node [anchor=north west][inner sep=0.75pt]  [font=\small,color={rgb, 255:red, 208; green, 2; blue, 27 }  ,opacity=1 ]  {$\overline{\boldsymbol{x}}_{n} =(0,39)$};
\draw (401,202.4) node [anchor=north west][inner sep=0.75pt]  [font=\scriptsize]  {$A^g\boldsymbol{y}$};
\draw (391,148.4) node [anchor=north west][inner sep=0.75pt]  [font=\scriptsize]  {$\textcolor[rgb]{0.82,0.01,0.11}{AG}\boldsymbol{\textcolor[rgb]{0.82,0.01,0.11}{y}}$};
\draw (385,122.4) node [anchor=north west][inner sep=0.75pt]  [font=\scriptsize]  {$A^gAA^g\boldsymbol{y}$};
\draw (394,82.4) node [anchor=north west][inner sep=0.75pt]  [font=\scriptsize]  {$\textcolor[rgb]{0.82,0.01,0.11}{AG}\boldsymbol{\textcolor[rgb]{0.82,0.01,0.11}{y}}$};
\draw (533,144.4) node [anchor=north west][inner sep=0.75pt]  [font=\small]  {$\overline{\boldsymbol{y}} =(13,0,0)$};

\end{tikzpicture}

}
\caption{When $A^g$ is a generalized inverse for which only the first Penrose identity holds, or $1$-inverse for $A$, the fundamental subspaces interact in a nontrivial way. The provided example demonstrates the case in which $\mathrm{rank}(A^T) < \mathrm{rank}(A^g)$ and there are vectors in the left nullspace $\hbox{\textbf{N}}(A^T)$ that $A^g$ maps to nontrivial vectors in the column space  $\hbox{\textbf{C}}(A^g)$.}
\label{fig:1-inv example}
\end{figure}

\section{The general formula randomized }

It follows that the first Penrose identity only is enough to characterize the general solution of 
$$
A\boldsymbol{x} = \boldsymbol{b}.
$$
The second Penrose identity is not required. 

Given  
$$
\boldsymbol{x} = G\boldsymbol{b}+(I_n-GA)\boldsymbol{z},
$$
we have $A\boldsymbol{x} = AG\boldsymbol{b}+(A-AA^gA)\boldsymbol{z} = AG\boldsymbol{b} =\boldsymbol{b}$ for arbitrary $\boldsymbol{z}$ and $\boldsymbol{b} \in \colspace{A}$. That is summarized by the following famous result due to Penrose \cite{penrose1955generalized,ben2003generalized}:

\vspace{10pt}
\begin{theorem}\label{thm:Penrose}
Given $A$, $\bar{C}$ and $\bar{R}$, a necessary and sufficient condition for the equation $\bar{C}X\bar{R} = A$ to have a solution $X$ is that
\begin{align}
(\bar{C}G)A(H\bar{R}) = A,
\end{align}
where $G$ is a generalized 1-inverse of $\bar{C}$ and $H$ is a generalized 1-inverse of $\bar{R}$, i.e., $\bar{C}G\bar{C} = \bar{C}$ and $\bar{R}H\bar{R} = \bar{R}$. Then
\begin{align}
X = GAH + Z - (G\bar{C})Z(\bar{R}H),
\end{align}
where $Z$ is arbitrary.
\end{theorem}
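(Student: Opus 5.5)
Necessity comes first. Suppose some $X$ satisfies $\bar{C}X\bar{R} = A$. Then
\[
(\bar{C}G)A(H\bar{R}) = \bar{C}G\bar{C}X\bar{R}H\bar{R}.
\]
The defining identities $\bar{C}G\bar{C}=\bar{C}$ and $\bar{R}H\bar{R}=\bar{R}$ collapse the right side to $\bar{C}X\bar{R}=A$. So the consistency condition holds for every choice of $1$-inverses $G$ and $H$.

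For sufficiency, assume $(\bar{C}G)A(H\bar{R})=A$. Take $X_0 = GAH$. Then $\bar{C}X_0\bar{R} = \bar{C}GAH\bar{R} = A$ by hypothesis, so the equation is solvable. This mirrors the vector case $\boldsymbol{x}=G\boldsymbol{b}+(I_n-GA)\boldsymbol{z}$ discussed just before the theorem: there consistency was $AG\boldsymbol{b}=\boldsymbol{b}$, and $G\boldsymbol{b}$ was a particular solution.

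For the general solution, assume consistency and show two inclusions.

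\textbf{Every matrix of the given form is a solution.} Compute
\[
\bar{C}\bigl(GAH + Z - G\bar{C}Z\bar{R}H\bigr)\bar{R} = A + \bar{C}Z\bar{R} - (\bar{C}G\bar{C})Z(\bar{R}H\bar{R}).
\]
The last term equals $\bar{C}Z\bar{R}$ by the $1$-inverse identities. It cancels the middle term and leaves $A$.

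\textbf{Every solution has the given form.} Let $X$ be any solution and choose $Z=X$. Then
\[
GAH + X - G\bar{C}X\bar{R}H = GAH + X - G A H = X,
\]
since $\bar{C}X\bar{R}=A$. So $X$ is attained by the formula.

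The only step that needs any thought is this last one: recognizing that the arbitrary $Z$ can be taken to be the solution itself. The nullspace-type term $Z - G\bar{C}Z\bar{R}H$ plays the role of $(I_n-GA)\boldsymbol{z}$ in the vector case, so this is the natural move. After that, everything reduces to repeated use of $\bar{C}G\bar{C}=\bar{C}$ and $\bar{R}H\bar{R}=\bar{R}$. No rank or subspace arguments (such as Lemma~\ref{lemma:C-deflation-N-inflation}) are needed.
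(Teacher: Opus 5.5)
Your argument is correct, and it is the standard proof from Penrose's original paper. Necessity follows by sandwiching $\bar{C}X\bar{R}=A$ between $\bar{C}G$ and $H\bar{R}$, sufficiency comes from the particular solution $GAH$, and you verify both inclusions for the general solution, choosing $Z=X$ for completeness. The paper gives no proof of its own: it cites Penrose and Ben-Israel--Greville, and only motivates the result through the vector case $\boldsymbol{x}=G\boldsymbol{b}+(I_n-GA)\boldsymbol{z}$ and the projectors $\bar{C}G$ and $H\bar{R}$, so your write-up supplies exactly the argument the paper relies on.
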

\vspace{10pt}

\noindent
Given $A$, $\bar{C}$ and $\bar{R}$, we construct two projections, $\bar{C}G$ for the column space $\hbox{\textbf{C}}(A)$ and $H\bar{R}$ for the row space $\hbox{\textbf{C}}(A^T)$. They make the linear matrix equation work for $X$ and guarantee $\bar{C}X\bar{R}$ equals exactly $A$. That also  provides an elegant characterization of generalized 1-inverse $A^g$ (see e.g. \cite{ben2003generalized}). Consider
\begin{align}
X = A^g + Z - (A^gA)Z(AA^g),
\end{align}
where $Z$ is a random matrix. By the first Penrose identity, we immediately have:
\begin{align*}
AXA = AA^gA +AZA-AZA = AA^gA = A.
\end{align*}
Any $X$ defined by the randomized formula above is a 1-inverse of $A$. But that randomization is not the one we should be looking for.

\subsection{Pseudoinverse randomized }

In practical engineering contexts, great outcomes are achieved using low-rank randomized approximations such as Nystr{\"o}m or CUR decompositions and sketch-and-precondition techniques \cite{nakatsukasa2020fast,hamm2020perspectives,nakatsukasa2024sketch-and-precondition}. We believe this justifies the formal integration of randomization within the pseudoinverse formula to explore its randomized framework further. To do that, we can use Theorem \ref{thm:Penrose} again.

Let $P$ and $Q$ denote random sampling matrices. We can use them to construct the following $1$-inverses of $\bar{C}$ and $\bar{R}$,
\begin{align}
G_P = (P^T\bar{C})^+P^T
\quad\text{and}\quad
H_Q = Q(\bar{R}Q)^+.
\end{align}
It is easy to see from the properties of pseudoinverse, that $\bar{C}G_P\bar{C} = \bar{C}$ and $\bar{R}H_Q\bar{R} = \bar{R}$ under proper rank conditions. As a result we introduce oblique projectors, $\bar{C}G_P$ and $H_Q\bar{R}$. The first one projects onto the column space $\hbox{\textbf{C}}(A)$ along the nullspace $\hbox{\textbf{N}}(P^T)$. Similarly, the oblique projector $H_Q\bar{R}$ projects onto the row space $\hbox{\textbf{C}}(A^T)$ along the nullspace $\hbox{\textbf{N}}(Q)$. See illustration in Figure \ref{fig:oblique projection example},

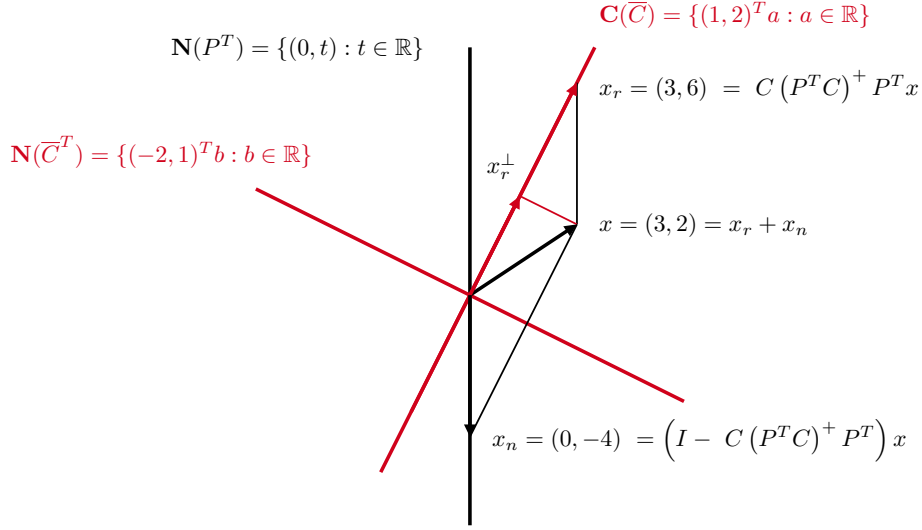
\begin{figure}[tb]
\centering    
\resizebox{.75\textwidth}{!}{
\tikzset{every picture/.style={line width=0.75pt}} 
\begin{tikzpicture}[x=0.75pt,y=0.75pt,yscale=-1,xscale=1]
\draw [color={rgb, 255:red, 208; green, 2; blue, 27 }  ,draw opacity=1 ][fill={rgb, 255:red, 208; green, 2; blue, 27 }  ,fill opacity=1 ][line width=1.5]    (450,60) -- (347,266) ;
\draw [color={rgb, 255:red, 0; green, 0; blue, 0 }  ,draw opacity=1 ][line width=0.75]    (440,80) -- (440,160) ;
\draw [line width=1.5]    (380,60) -- (380,330) ;
\draw [color={rgb, 255:red, 208; green, 2; blue, 27 }  ,draw opacity=1 ][fill={rgb, 255:red, 208; green, 2; blue, 27 }  ,fill opacity=1 ][line width=1.5]    (438.21,83.58) -- (330,300) ;
\draw [shift={(440,80)}, rotate = 116.57] [fill={rgb, 255:red, 208; green, 2; blue, 27 }  ,fill opacity=1 ][line width=0.08]  [draw opacity=0] (6.97,-3.35) -- (0,0) -- (6.97,3.35) -- cycle    ;
\draw [color={rgb, 255:red, 208; green, 2; blue, 27 }  ,draw opacity=1 ][fill={rgb, 255:red, 208; green, 2; blue, 27 }  ,fill opacity=1 ][line width=1.5]    (500,260) -- (260,140) ;
\draw [line width=1.5]    (380,200) -- (436.67,162.22) ;
\draw [shift={(440,160)}, rotate = 146.31] [fill={rgb, 255:red, 0; green, 0; blue, 0 }  ][line width=0.08]  [draw opacity=0] (6.97,-3.35) -- (0,0) -- (6.97,3.35) -- cycle    ;
\draw [color={rgb, 255:red, 208; green, 2; blue, 27 }  ,draw opacity=1 ]   (408,144) -- (440,160) ;
\draw [color={rgb, 255:red, 208; green, 2; blue, 27 }  ,draw opacity=1 ][line width=1.5]    (380,200) -- (406.21,147.58) ;
\draw [shift={(408,144)}, rotate = 116.57] [fill={rgb, 255:red, 208; green, 2; blue, 27 }  ,fill opacity=1 ][line width=0.08]  [draw opacity=0] (6.97,-3.35) -- (0,0) -- (6.97,3.35) -- cycle    ;
\draw [color={rgb, 255:red, 0; green, 0; blue, 0 }  ,draw opacity=1 ][line width=1.5]    (380,200) -- (380,276) ;
\draw [shift={(380,280)}, rotate = 270] [fill={rgb, 255:red, 0; green, 0; blue, 0 }  ,fill opacity=1 ][line width=0.08]  [draw opacity=0] (6.97,-3.35) -- (0,0) -- (6.97,3.35) -- cycle    ;
\draw [color={rgb, 255:red, 0; green, 0; blue, 0 }  ,draw opacity=1 ][fill={rgb, 255:red, 0; green, 0; blue, 0 }  ,fill opacity=0.37 ][line width=0.75]    (440,160) -- (380,280) ;
\draw (211,51.4) node [anchor=north west][inner sep=0.75pt]    {$\mathbf{N} (P^{T} )=\{(0,t):t\in \mathbb{R} \}$};
\draw (121,108.4) node [anchor=north west][inner sep=0.75pt]    {$\textcolor[rgb]{0.82,0.01,0.11}{\mathbf{N} (\overline{C}^{T} )=\{(-2,1)^{T} b:b\in \mathbb{R} \}}$};
\draw (451,32.4) node [anchor=north west][inner sep=0.75pt]    {$\textcolor[rgb]{0.82,0.01,0.11}{\mathbf{C} (\overline{C} )=\{(1,2)^{T} a:a\in \mathbb{R} \}}$};
\draw (451,152.4) node [anchor=north west][inner sep=0.75pt]    {$x=(3,2)=x_{r} +x_{n}$};
\draw (451,71.4) node [anchor=north west][inner sep=0.75pt]    {$x_{r} =(3,6)\ =\ C\left( P^{T} C\right)^{+} P^{T} x$};
\draw (391,268.4) node [anchor=north west][inner sep=0.75pt]    {$x_{n} =(0,-4)\ =\left( I-\ C\left( P^{T} C\right)^{+} P^{T}\right) x$};
\draw (388,118.4) node [anchor=north west][inner sep=0.75pt]    {$x_{r}^{\perp }$};
\end{tikzpicture}
}
\caption{For $\bar{C} = (1,2)^T$ and $P = (1,0)^T$ the oblique projector $\bar{C}G_P$ projects $\boldsymbol{x} = (3,2)^T$ onto the column space  $\hbox{\textbf{C}}(\bar{C})$ along the nullspace $\hbox{\textbf{N}}(P^T)$. That results in the decomposition $\boldsymbol{x}=\boldsymbol{x}_r+\boldsymbol{x}_n$. The orthogonal projection $\boldsymbol{x}_r^{\perp}$ is produced by the orthogonal projector $\bar{C}\bar{C}^+$ that acts along the nullspace $\hbox{\textbf{N}}(\bar{C}^T)$.}
\label{fig:oblique projection example}
\end{figure}

We now conclude our observations with the following new result in Theorem \ref{thm:general-MP-formula}.

\vspace{10pt}
\begin{theorem}\label{thm:general-MP-formula}
The pseudoinverse $A^+$ of a product $A=CR$ is equal to
\begin{equation}\label{eq:general-MP-formula}
A^+ = (P^TCR)^+P^TCRQ(CRQ)^+,
\end{equation}
if and only if $P$ and $Q$ satisfy the following rank-preservation condition:
\begin{equation}\label{eq:rank-preservation-MP-formula}
\mathrm{rank}(P^TA) = \mathrm{rank}(AQ) = \mathrm{rank}(A).
\end{equation}
\end{theorem}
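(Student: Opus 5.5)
Write $X=(P^TA)^+P^TAQ(AQ)^+$ with $A=CR$; the factorization plays no role beyond $A=CR$. The plan has two halves. For sufficiency we use the characterization already used in the proof of Theorem~\ref{thm:rol-formula}: a matrix equals $A^+$ once $AX$ is the orthogonal projector onto $\colspace{A}$ and $XA$ is the orthogonal projector onto $\colspace{A^T}$. Together these give $X = XAX$ with column space $\colspace{A^T}$ and nullspace $\nullspace{A^T}$, and then $X=A^+$.

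\textbf{Sufficiency.} First apply Lemma~\ref{lemma:C-deflation-N-inflation} to the rank conditions. From $\mathrm{rank}(P^TA)=\mathrm{rank}(A)$ we get $\nullspace{P^TA}=\nullspace{A}$, hence $\colspace{(P^TA)^T}=\colspace{A^T}$. Therefore $(P^TA)^+P^TA$, the orthogonal projector onto the row space of $P^TA$, equals $A^+A$. Symmetrically, $\mathrm{rank}(AQ)=\mathrm{rank}(A)$ gives $\colspace{AQ}=\colspace{A}$, so $AQ(AQ)^+=AA^+$. Now insert the identity $A=AA^+A$ in the middle of $X$:
\[
X=(P^TA)^+P^T\,AA^+A\,Q(AQ)^+ = \bigl[(P^TA)^+P^TA\bigr]A^+\bigl[AQ(AQ)^+\bigr] = A^+A\,A^+\,AA^+ = A^+ .
\]
This direct computation is the cleanest route. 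We can still remark that it matches the projector picture: $AX=AA^+$ and $XA=A^+A$.

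\textbf{Necessity.} Suppose $X=A^+$. Then $\mathrm{rank}(A)=\mathrm{rank}(A^+)=\mathrm{rank}(X)\le\min\{\mathrm{rank}(P^TA),\mathrm{rank}(AQ)\}$, because $X$ has the factor $P^TA$ (through $(P^TA)^+P^TA$) and the factor $AQ$. On the other hand, $\mathrm{rank}(P^TA)\le\mathrm{rank}(A)$ and $\mathrm{rank}(AQ)\le\mathrm{rank}(A)$ always hold. Hence all three ranks are equal.

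\textbf{Main obstacle.} The delicate step is justifying $(P^TA)^+P^TA=A^+A$ from first principles. We rely on the facts that $M^+M$ is the orthogonal projector onto $\colspace{M^T}$, and that an orthogonal projector is determined by its range. Both are consistent with the paper's subspace viewpoint (Figures~\ref{fig:1}--\ref{fig:2}). The rank bound in the necessity direction is routine.
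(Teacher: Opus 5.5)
Your proof is correct and is essentially the paper's argument. You derive the same two projector identities $(P^TA)^+P^TA=A^+A$ and $AQ(AQ)^+=AA^+$ from the rank conditions (you cite Lemma~\ref{lemma:C-deflation-N-inflation} where the paper counts dimensions), close with $A^+AA^+=A^+$ (inserting $A=AA^+A$ is just a different bracketing), and use the same bound $\mathrm{rank}(A)=\mathrm{rank}(X)\le\min\{\mathrm{rank}(P^TA),\mathrm{rank}(AQ)\}$ for necessity.

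One caution: the ``characterization'' in your opening paragraph is false as stated. The conditions $AX=AA^+$ and $XA=A^+A$ only give $XAX=A^+$; for $A=0$ every $X$ satisfies both. So concluding $X=A^+$ would also need $XAX=X$, or the range and nullspace facts the paper establishes in Theorem~\ref{thm:rol-formula}. Your direct computation never relies on that claim, so nothing is lost.
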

\vspace{10pt}
\begin{proof}
If $A\boldsymbol{x}=0$, then $P^TA\boldsymbol{x}=0$, so $\hbox{\textbf{N}}(A) \subset \hbox{\textbf{N}}(P^TA)$. When $\mathrm{rank}(P^TA) = \mathrm{rank}(AQ) = \mathrm{rank}(A)$ for some $P$ and $Q$, then 
$$
\dim(\hbox{\textbf{N}}(A)) = n - \mathrm{rank}(A)\ \text{and}\ 
\dim(\hbox{\textbf{N}}(P^TA)) = n - \mathrm{rank}(P^TA).
$$
It follows that $\dim(\hbox{\textbf{N}}(A)) = \dim(\hbox{\textbf{N}}(P^TA))$ and, therefore, $\hbox{\textbf{N}}(A) = \hbox{\textbf{N}}(P^TA)$. That implies the orthogonal complements must also be equal, i.e., $\hbox{\textbf{C}}(A^T) = \hbox{\textbf{C}}((P^TA)^T).$

Since the subspaces of $A^T$ and $P^TA$ and $A^+$ are identical, the orthogonal projectors must be equal:
\begin{equation}\label{eq:proj_R}
(P^TA)^+(P^TA) = A^+A.
\end{equation}

By the same argument, we establish that
$$\hbox{\textbf{C}}(AQ) = \hbox{\textbf{C}}(A).$$
And, since $AA^+$ and $AQ(AQ)^+$ are the unique orthogonal projectors onto $\hbox{\textbf{C}}(A)$ and $\hbox{\textbf{C}}(AQ)$, with identical subspaces, we conclude that:
\begin{equation}\label{eq:proj_C}
AQ(AQ)^+ = AA^+.
\end{equation}
Then, if the rank preservation condition holds, we have:
\begin{align*}
(P^TA)^+P^TAQ(AQ)^+ 
&=
[(P^TA)^+P^TA]Q(AQ)^+ 
= A^+AQ(AQ)^+ 
\\
&= A^+[AQ(AQ)^+] = A^+AA^+ = A^+.
\end{align*}

Now we show that the rank preservation follows from the equality $A^+ = (P^TA)^+P^TAQ(AQ)^+$.  We have
\begin{align*}
\mathrm{rank}(A) &= \mathrm{rank}(A^+) \\
                 &= \mathrm{rank}\left((P^TA)^+P^TAQ(AQ)^+\right) \\
                 &\le \mathrm{rank}((P^TA)^+) \\
                 &= \mathrm{rank}(P^TA).
\end{align*}
Since it is always true that $\mathrm{rank}(P^TA) \le \mathrm{rank}(A)$, we must have $\mathrm{rank}(P^TA) = \mathrm{rank}(A)$. Similarly, since $\mathrm{rank}(AQ) \le \mathrm{rank}(A)$, we must have $\mathrm{rank}(AQ) = \mathrm{rank}(A)$. Thus, we have:
\begin{equation*}
\mathrm{rank}(P^TA) = \mathrm{rank}(AQ) = \mathrm{rank}(A),
\end{equation*}
as desired.
\end{proof}
\vspace{10pt}

The formula
\begin{equation}\label{eq:sketched-mp-formula}
  A^+ = (P^T C R)^+\, P^T C R Q\, (C R Q)^+
\end{equation}
admits a natural interpretation in terms of randomized sketching. It introduces two sketching matrices, an $m$ by $\ell_P$ matrix $P^T$ as a row sketch and $n$ by $\ell_Q$ matrix $Q$ as a column sketch. For a given $m$ by $n$ matrix $A$, the sketched matrices
$$
  P^T A = P^T C R 
  \quad \text{and}\quad 
  A Q = C R Q,
$$
are of sizes $\ell_P$ by $n$ and $m$ by $\ell_Q$, and can be substantially smaller than $m$ by $n$.

Theorem~\ref{thm:general-MP-formula} shows that
\eqref{eq:sketched-mp-formula} is exact if and only if the sketches preserve the rank of $A$. Thus, the sketched formula is an exact characterization of the Moore-Penrose pseudoinverse under the rank preservation conditions (\ref{eq:rank-preservation-MP-formula}).

\subsection{Two recipes for $P$ and $Q$}

One way to select rank-preserving matrices $P$ and $Q$ is to use a full-rank decomposition of $A$. A powerful example illustrating the idea comes with $A = \bar{Q}\bar{R}$ decomposition. By setting
\begin{align*}
P = \bar{Q} \ \mathrm{and}\ Q = \bar{R}^+
\end{align*}
we get 
$$A^+ = 
(\bar{Q}^T\bar{Q}\bar{R})^+\bar{Q}^T\bar{Q}\bar{R}\bar{R}^+(\bar{Q}\bar{R}\bar{R}^+)^+.
$$ 
However, since 
$$
\hbox{\textbf{C}}((\bar{Q}\bar{R}\bar{R}^+)^+) = \hbox{\textbf{C}}(\bar{R}\bar{R}^+\bar{Q}^T) = \hbox{\textbf{C}}(\bar{R}\bar{R}^+)\bar{Q}^T
\quad\mathrm{and}\quad
\hbox{\textbf{C}}((\bar{Q}^T\bar{Q}\bar{R})^+) = \bar{R}^T\hbox{\textbf{C}}(\bar{Q}^TQ),
$$
from Theorem \ref{thm:correct-formula} we obtain the following known formula:
\begin{align*}
A^+ = 
(\bar{Q}^T\bar{Q}\bar{R})^+\bar{Q}^T\bar{Q}\bar{R}\bar{R}^+(\bar{Q}\bar{R}\bar{R}^+)^+ 
=
(\bar{Q}^T\bar{Q}\bar{R})^+(\bar{Q}\bar{R}\bar{R}^+)^+ 
= \bar{R}^+\bar{Q}^T.
\end{align*}
Given
\begin{align*}  
A
=\left[\begin{array}{rrr}1&0&1\\0&2&0\\\end{array}\right] 
=
\left[\begin{array}{rrr}1&0\\0&1\\\end{array}\right] 
\left[\begin{array}{rrr}1&0&1\\0&2&0\\\end{array}\right] = 
\bar{Q}\bar{R},
\end{align*}
the pseudoinverse for $A$ is
\begin{align*}
A^+
=\left[\begin{array}{rrr}1/2&0\\0&1/2\\1/2&0\\\end{array}\right] 
=
\left[\begin{array}{rrr}1/2&0\\0&1/2\\1/2&0\\\end{array}\right] 
\left[\begin{array}{rrr}1&0\\0&1\\\end{array}\right] = 
\bar{R}^+\bar{Q}^T.
\end{align*}
\vspace{10pt}

Another way to get $P$ and $Q$ is to select random matrices. As an example, consider the pseudoinverse of
$$
A=\left[\begin{array}{rrr}1&4&5\\2&3&5\\\end{array}\right]=\left[\begin{array}{rr}1&4\\2&3\\\end{array}\right]\left[\begin{array}{rrr}1&0&1\\0&1&1\\\end{array}\right]=CR,
\ \text{which is equal to}
\
A^+=\dfrac{1}{15}\left[\begin{array}{rrr}-8&9\\7&-6\\-1&3\\\end{array}\right].
$$
Randomly selected matrices
$$
P=\left[\begin{array}{rrr}2&2&2\\1&2&2\\\end{array}\right]
\quad\text{and}\quad
Q=\left[\begin{array}{rrr}1&1\\0&2\\0&0\\\end{array}\right]
$$
preserve $\mathrm{rank}(A) = \mathrm{rank}(P^TA) = \mathrm{rank}(AQ) = 2$. Therefore, 
\begin{align*}
(P^TCR)^{+}P^TC 
=\dfrac{1}{3}\left[\begin{array}{rrr}2&-1\\-1&2\\1&1\\\end{array}\right]
\quad\text{and}\quad
RQ(CRQ)^+
=\dfrac{1}{5}\left[\begin{array}{rrr}-3&4\\2&-1\\\end{array}\right]
\end{align*}
reconstruct the pseudoinverse
\begin{align*}
A^+=
\dfrac{1}{3}\left[\begin{array}{rrr}2&-1\\-1&2\\1&1\\\end{array}\right]
\cdot
\dfrac{1}{5}\left[\begin{array}{rrr}-3&4\\2&-1\\\end{array}\right]
=
\dfrac{1}{15}\left[\begin{array}{rrr}-8&9\\7&-6\\-1&3\\\end{array}\right].
\end{align*}
\vspace{10pt}

\subsection{Generalized formula generalized}

Observe that the formula in Theorem \ref{thm:general-MP-formula} employs oblique projectors determined using pseudoinverses to guarantee that all four Penrose identities are satisfied for the reconstructed $A^+$. But we can as well use projectors that are established through generalized inverses to obtain a generalized $\{1,2\}$-inverse $A^g$ for $A$. Let us verify that $AA^gA = A$. 

\vspace{10pt}
\begin{theorem}\label{thm:general-1inv-formula}
Consider an $m$ by $p$ matrix $P$ and $n$ by $q$ matrix $Q$ satisfying the following condition:
\begin{equation}
\mathrm{rank}(P^TA) = \mathrm{rank}(AQ) = \mathrm{rank}(A).
\end{equation}
Then, for any $\{1\}$-inverse $(P^TA)^g$ and $\{1\}$-inverse $(AQ)^g$,
\begin{equation}\label{eq:general-1inv-formula}
A^g = (P^TA)^gP^TAQ(AQ)^g
\end{equation}
is a generalized $\{1,2\}$-inverse of $A$.
\end{theorem}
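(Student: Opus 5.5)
The plan is to verify the two Penrose identities $AA^gA=A$ and $A^gAA^g=A^g$ directly. The only input needed is a pair of "cancellation" identities obtained from the rank condition through Lemma~\ref{lemma:C-deflation-N-inflation}. Write $G=(P^TA)^g$ and $H=(AQ)^g$, so that $A^g=GP^TAQH$ with $P^TAGP^TA=P^TA$ and $AQHAQ=AQ$.

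\textbf{Step 1.} First turn the rank conditions into subspace equalities. Since $\mathrm{rank}(P^TA)=\mathrm{rank}(A)$, Lemma~\ref{lemma:C-deflation-N-inflation} (applied with $P^T$ in front of $A$) gives $\nullspace{P^TA}=\nullspace{A}$. Since $\mathrm{rank}(AQ)=\mathrm{rank}(A)$, it gives $\colspace{AQ}=\colspace{A}$.

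\textbf{Step 2.} Use these equalities to derive the cancellation identities
\begin{equation*}
A\,G\,P^TA=A \qquad\text{and}\qquad AQ\,H\,A=A .
\end{equation*}
For the first, the $1$-inverse property gives $P^TA(GP^TA-I_n)=0$. Hence every column of $GP^TA-I_n$ lies in $\nullspace{P^TA}=\nullspace{A}$, so $A(GP^TA-I_n)=0$. For the second, $\colspace{A}=\colspace{AQ}$ lets us write $A=AQW$ for some $W$. Then $AQHA=AQHAQW=AQW=A$.

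I expect Step 2 to be the main obstacle. It is the only place where the rank hypothesis enters, and one must notice that the $1$-inverse identity alone only controls $P^TA$, not $A$. The nullspace equality is exactly what lifts the identity from $P^TA$ to $A$. If the null-space argument were awkward, the fallback is to write $A=MP^TA$ using row-space equality and then compute $AGP^TA=MP^TAGP^TA=MP^TA=A$.

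\textbf{Step 3.} Assemble the two Penrose identities:
\begin{align*}
AA^gA &= (AGP^TA)\,Q\,H\,A = AQHA = A,\\
A^gAA^g &= G P^T (AQHA)\, G P^T A Q H = G P^T (A G P^T A) Q H = GP^TAQH = A^g .
\end{align*}
Here the first line uses the first identity of Step 2 and then the second. The second line uses $AQHA=A$ followed by $AGP^TA=A$. Thus $A^g$ is a $\{1,2\}$-inverse of $A$. As a consistency check, this yields $\mathrm{rank}(A^g)=\mathrm{rank}(A)$, in line with the earlier remark that a $1$-inverse of equal rank is a $\{1,2\}$-inverse. Choosing $G=(P^TA)^+$ and $H=(AQ)^+$ recovers Theorem~\ref{thm:general-MP-formula} as the special case in which, additionally, all four Penrose identities hold.
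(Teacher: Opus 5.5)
Your proof is correct and follows essentially the same route as the paper. Both turn the rank conditions into $\nullspace{P^TA}=\nullspace{A}$ and $\colspace{AQ}=\colspace{A}$, derive the cancellation identities $AGP^TA=A$ and $AQHA=A$, and then verify $AA^gA=A$ and $A^gAA^g=A^g$ by the same substitutions, with your explicit factorization $A=AQW$ spelling out the column-space step that the paper only asserts with ``similarly''.
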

\vspace{10pt}
\begin{proof}
By assumption, $P^TA(P^TA)^gP^TA = P^TA$ and $AQ(AQ)^gAQ = AQ$. Therefore,
$$
P^TA(I-(P^TA)^gP^TA)=0
\quad \text{and}\quad
(I-AQ(AQ)^g)AQ = 0.
$$
Since $\mathrm{rank}(P^TA) = \mathrm{rank}(AQ) = \mathrm{rank}(A)$, that implies $\colspace{I-(P^TA)^gP^TA}\subset \nullspace{P^TA} = \nullspace{A}$. And, therefore, 
$$
A(P^TA)^gP^TA = A.
$$
since $A(I-(P^TA)^gP^TA) = A - A(P^TA)^gP^TA = 0.$
Similarly, we have $\colspace{AQ} = \colspace{A}$, which implies 
$$
AQ(AQ)^gA = A.
$$
We can now verify that the first and the second Penrose identity follows from the established properties of $\{1\}$-inverses.

The first identity is easy to see,
$$
A(P^TA)^g(P^TA)Q(AQ)^gA= AQ(AQ)^gA = A.
$$
The second identity is somewhat more hidden, but it is also true. For clarity of presentation, let $G_P = (P^TA)^g$ and $G_Q = (AQ)^g$. By associativity:
\begin{align*}
A^g A A^g &= G_P P^T (A Q G_Q A) G_P P^T A Q G_Q.
\end{align*}
Since $AQ G_Q A = A$:
\begin{align*}
A^g A A^g &= G_P P^T A G_P P^T A Q G_Q 
\\
&= G_P P^T (A G_P P^TA) Q G_Q.
\end{align*}
And since $A G_P P^TA = A$:
\begin{align*}
A^g A A^g &= G_P P^T A Q G_Q = A^g.
\end{align*}
Therefore, 
\begin{equation*}
A^g = (P^TCR)^gP^TCRQ(CRQ)^g
\end{equation*}
is a generalized $\{1,2\}$-inverse of $A = CR$.
\end{proof}
\vspace{10pt}

We can now transform the general formula in Theorem \ref{thm:general-1inv-formula} to the following compact formula in  Theorem \ref{thm:Nystroem-12ginv-formula} below (also, compare that with Theorem 1.6.5 in \cite{ben2003generalized}).

\vspace{10pt}
\begin{theorem}\label{thm:Nystroem-12ginv-formula}
Consider an $m$ by $p$ matrix $P$ and $n$ by $q$ matrix $Q$ satisfying the rank preservation condition:
\begin{equation}
\mathrm{rank}(P^TA) = \mathrm{rank}(AQ) = \mathrm{rank}(A).
\end{equation}
Let $P^TAQ = CR$ be a full rank factorization of the sketched matrix. Then 
\begin{equation}\label{eq:Nystroem-12ginv-formula}
A^g = Q(CR)^+P^T
\end{equation}
is a generalized \{1,2\}-inverse for $A$.
\end{theorem}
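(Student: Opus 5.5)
The plan is to reduce the statement to Theorem~\ref{thm:general-1inv-formula} by choosing particular $\{1\}$-inverses of the sketches $P^TA$ and $AQ$, both built from the pseudoinverse of the small matrix $M = P^TAQ = CR$. Since $C$ and $R$ form a full rank factorization, Theorem~\ref{thm:rol-formula} lets us write $(CR)^+ = M^+$. So it suffices to show that
$$
G_P = QM^+ \quad\text{and}\quad G_Q = M^+P^T
$$
are $\{1\}$-inverses of $P^TA$ and $AQ$, respectively. Once that is done, Theorem~\ref{thm:general-1inv-formula} says $G_P P^TAQ\, G_Q$ is a $\{1,2\}$-inverse of $A$. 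That product collapses to $QM^+MM^+P^T = QM^+P^T$ by the second Penrose identity for $M^+$, which is exactly the claimed formula.

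The first step is to show that the double sketch keeps the rank: $\mathrm{rank}(M) = \mathrm{rank}(A)$. As in the proof of Theorem~\ref{thm:general-MP-formula}, the condition $\mathrm{rank}(P^TA) = \mathrm{rank}(A)$ together with Lemma~\ref{lemma:C-deflation-N-inflation} gives $\nullspace{P^TA} = \nullspace{A}$. Hence $M\boldsymbol{x} = P^TA(Q\boldsymbol{x}) = \boldsymbol{0}$ holds exactly when $AQ\boldsymbol{x} = \boldsymbol{0}$, so $\nullspace{M} = \nullspace{AQ}$. This gives $\mathrm{rank}(M) = \mathrm{rank}(AQ) = \mathrm{rank}(A)$.

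The second step applies Lemma~\ref{lemma:C-deflation-N-inflation} twice more.
\begin{itemize}
\item Since $M = (P^TA)Q$ has the same rank as $P^TA$, we get $\colspace{M} = \colspace{P^TA}$. The orthogonal projector $MM^+$ onto $\colspace{M}$ then fixes every column of $P^TA$, so $P^TA\,(QM^+)\,P^TA = MM^+P^TA = P^TA$.
\item Since $M = P^T(AQ)$ has the same rank as $AQ$, we get $\nullspace{M} = \nullspace{AQ}$. Their orthogonal complements, the row spaces, therefore coincide. The projector $M^+M$ onto that row space fixes every row of $AQ$, so $AQ\,(M^+P^T)\,AQ = AQM^+M = AQ$.
\end{itemize}
This establishes both $\{1\}$-inverse properties, and the reduction above finishes the proof.

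The main obstacle is the first step, namely checking that the rank survives sketching on both sides at once, since rank preservation for $P^TA$ and $AQ$ separately does not obviously carry over to $P^TAQ$. The nullspace identity $\nullspace{P^TA} = \nullspace{A}$ resolves it. Everything after that is routine projector bookkeeping. As an alternative to citing Theorem~\ref{thm:general-1inv-formula}, one could verify $AA^gA = A$ and $A^gAA^g = A^g$ directly using $AQM^+P^TA = A$. That identity follows from the same two facts together with $AQ(AQ)^+A = A$ and the analogous identity for $P^TA$, but the reduction is cleaner.
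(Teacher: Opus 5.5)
Your proof is correct, but it takes a different route from the paper's. The paper factors $A$ itself as $A = C_AR_A$ and writes the sketch as $P^TAQ = KL$, with $K = P^TC_A$ and $L = R_AQ$. It uses the rank conditions to show that $K$ has full column rank and $L$ has full row rank. It then uses Theorem~\ref{thm:rol-formula} in an essential way to get $(P^TAQ)^+ = L^+K^+$, and both Penrose identities follow by direct multiplication from $K^+K = I_r = LL^+$.

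You never factor $A$ and never need the reverse order law. Your appeal to Theorem~\ref{thm:rol-formula} for $(CR)^+ = M^+$ is vacuous, since $CR$ is $M$ by definition; the factorization $CR$ is never actually used. Instead you show that the formula is a special case of Theorem~\ref{thm:general-1inv-formula}. Using Lemma~\ref{lemma:C-deflation-N-inflation} and the projectors $MM^+$ and $M^+M$, you check that $QM^+$ and $M^+P^T$ are $\{1\}$-inverses of $P^TA$ and $AQ$. Your identity $\nullspace{P^TAQ} = \nullspace{AQ}$ does the job that the full rank of $K$ and $L$ does in the paper.

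What each route buys: the paper's proof is self-contained. It also produces explicit expressions such as $AA^g = C_AK^+P^T$, which the paper uses right afterwards to explain why $A^g$ is generally not symmetric. Yours depends on Theorem~\ref{thm:general-1inv-formula} having been proved first. In exchange, it stays within the subspace viewpoint the paper advocates and exposes the logical link between the two theorems. It also makes plain that the Moore--Penrose property of $M^+$ is not needed. Your two $\{1\}$-inverse checks use only $MM^-M = M$, and the final collapse uses only $M^-MM^- = M^-$, so any reflexive inverse $M^-$ of $M$ would serve equally well.
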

\vspace{10pt}
\begin{proof}
Let $A = C_A R_A$ be a full rank factorization of the $m$ by $n$ matrix $A$, where the $m$ by $r$ matrix $C_A$ has full column rank $r$ and the $r$ by $n$ matrix $R_A$ has full row rank $r$. Substituting this into the sketch $P^TAQ$, we obtain the factorization:
\begin{align*}
CR = (P^T C_A)(R_A Q) = KL.
\end{align*}
The rank preservation conditions imply that these factors retain full rank. Since $\mathrm{rank}(P^T A) = r$, we see that $\mathrm{rank}(P^T C_AR_A) = \mathrm{rank}(KR_A) = \mathrm{rank}(K) = r$. And, similarly, since $\mathrm{rank}(AQ) = r$, we have $\mathrm{rank}(C_AR_AQ) = \mathrm{rank}(C_AL) = \mathrm{rank}(L) = r$.

Since $K = P^TC_A$ has full column rank and $L = R_AQ$ has full row rank, the product $KL$ is a full rank factorization of the sketch $CR = P^TAQ$. Then, by Theorem \ref{thm:rol-formula} (the Reverse Order Law), the pseudoinverse is:
\begin{align*}
(CR)^+ = (KL)^+ = L^+ K^+.
\end{align*}
We can now verify the first two Penrose identities for $A^g = Q (CR)^+ P^T = Q L^+ K^+ P^T$.
The first identity yields:
\begin{align*}
A A^g A &= (C_A R_A) Q (L^+ K^+) P^T (C_A R_A) \\
        &= C_A (R_A Q) L^+ K^+ (P^T C_A) R_A \\
        &= C_A (L L^+) (K^+ K) R_A.
\end{align*}
Since $L$ has full row rank, $L L^+ = I_r$. Since $K$ has full column rank, $K^+ K = I_r$. Thus:
\begin{align*}
A A^g A = C_A I_r I_r R_A = C_A R_A = A.
\end{align*}
The second identity yields:
\begin{align*}
A^g A A^g &= Q (L^+ K^+) P^T (C_A R_A) Q (L^+ K^+) P^T \\
          &= Q L^+ K^+ (P^T C_A) (R_A Q) L^+ K^+ P^T \\
          &= Q L^+ (K^+ K) (L L^+) K^+ P^T \\
          &= Q L^+ I_r I_r K^+ P^T \\
          &= Q (L^+ K^+) P^T = A^g.
\end{align*}
That shows we have constructed the generalized $\{1,2\}$-inverse of $A$.
\end{proof}
\vspace{10pt}

Contrary to formula (\ref{eq:general-MP-formula}) in Theorem \ref{thm:general-MP-formula}, formula (\ref{eq:Nystroem-12ginv-formula}) does not satisfy the symmetry conditions without additional assumptions. That means $AA^g = C_A K^+ P^T$ is generally not symmetric. However, if $P$ and $Q$ represent a rotation of bases, symmetry is restored.

\vspace{10pt}
\begin{theorem}\label{thm:Nystroem-MP-formula}
If $P$ and $Q$ are square orthogonal matrices (i.e., $P^TP = I_m$ and $Q^TQ = I_n$), then 
\begin{equation}\label{eq:Nystroem-MP-formula}
A^+ = Q(P^TAQ)^+P^T.
\end{equation}
\end{theorem}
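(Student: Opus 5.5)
We prove Theorem~\ref{thm:Nystroem-MP-formula} by checking that $X = Q(P^TAQ)^+P^T$ satisfies the four Penrose identities (\ref{eq:Penrose}) for $A$, since those identities determine $A^+$ uniquely. The only facts needed are that square orthogonal matrices are invertible with $P^{-1}=P^T$ and $Q^{-1}=Q^T$, so that $PP^T=I_m$ and $QQ^T=I_n$ as well, together with the four Penrose identities for $B = P^TAQ$ and its pseudoinverse $B^+$.

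First, write $A = PBQ^T$, which follows from $PP^T=I_m$ and $QQ^T=I_n$. Then
\[
AXA = PBQ^TQB^+P^TPBQ^T = PBB^+BQ^T = PBQ^T = A,
\]
and in the same way $XAX = QB^+BB^+P^T = QB^+P^T = X$. For the symmetry conditions we compute $AX = PBB^+P^T$, which is symmetric because $BB^+$ is symmetric and conjugation by $P$ preserves symmetry. Likewise $XA = QB^+BQ^T$ is symmetric. By uniqueness of the Moore--Penrose inverse, $X=A^+$.

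As an alternative in the spirit of the paper, we could invoke Theorem~\ref{thm:general-MP-formula}. Orthogonal $P$ and $Q$ are invertible, so the rank-preservation condition (\ref{eq:rank-preservation-MP-formula}) holds automatically. Then Theorem~\ref{thm:Nystroem-12ginv-formula} shows that $X$ is a $\{1,2\}$-inverse, and the symmetry of $AX$ and $XA$ computed above upgrades it to $A^+$.

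The main point requiring care is that orthogonality must be used in both directions: $P^TP=I$ and also $PP^T=I$. The latter needs $P$ to be square, and it is exactly what lets us rewrite $A$ as $PBQ^T$ and makes $AX$ an orthogonal conjugate of the orthogonal projector $BB^+$. For rectangular $P$ with only $P^TP=I$, $AX$ would no longer be symmetric, consistent with the remark preceding the theorem. Beyond this, all steps are routine algebra.
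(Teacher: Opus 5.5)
Your proof is correct and takes essentially the same route as the paper: both rewrite $A = P(P^TAQ)Q^T$ using $PP^T = I_m$ and $QQ^T = I_n$, and then apply the orthogonal invariance $(PBQ^T)^+ = QB^+P^T$. The only difference is that the paper cites this invariance as a ``property of the pseudoinverse,'' whereas your check of the four Penrose identities proves it.
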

\vspace{10pt}
\begin{proof}
Since $P$ and $Q$ are square and orthogonal, $P^T = P^{-1}$ and $Q^T = Q^{-1}$. We can invert the sketching relation $CR = P^T A Q$ to recover $A$:
\begin{align*}
A = P (CR) Q^T.
\end{align*}
It now follows from the properties of pseudoinverse that
\begin{align*}
A^+ = (P (CR) Q^T)^+ = (Q^T)^T (CR)^+ P^T = Q (CR)^+ P^T.
\end{align*}
\end{proof}
\vspace{10pt}

\section{Examples and applications}

In this section we study how low-rank $A$ comes from low-rank $A^+$. First, we present basic implementations of formulas derived earlier. Then, we use them in several interesting examples.

By Theorem~\ref{thm:general-MP-formula}, rank-preserving matrices $P$ ($m$ by $p$) and $Q$ ($n$ by $q$) reconstruct the pseudoinverse $A^+$ ($n$ by $m$) of $A$ ($m$ by $n$) in formula (\ref{eq:general-MP-formula}). That formula describes the following simple (or naive) algorithm:
\vspace{10pt}
\begin{verbatim}
% Randomized pseudoinverse approximation of A
[m,n]  = size(A);  P = randn(m,p); Q = randn(n,q);
PTA    = P'*A;    AQ = A*Q; 
Arplus = pinv(PTA)*PTA*Q*pinv(AQ);
\end{verbatim}
\vspace{10pt}
An improved version comes directly from Theorem~\ref{thm:Nystroem-MP-formula}. One approach to implement it is to calculate the SVD of the easier-to-calculate sketched matrix $P^TAQ$ and then take its inverse, as the following code illustrates:
%
\begin{verbatim}
% Randomized pseudoinverse approximation of A
[m,n]   = size(A); P = orth(randn(m,p)); Q = orth(randn(n,q));
Arplus  = Q*pinv(P'*A*Q)*P';
\end{verbatim}
\vspace{10pt}
When only generalized inverse is required, we can use the algorithm given by Theorem \ref{thm:general-1inv-formula}.
\vspace{10pt}
\begin{verbatim}
% Randomized generalized 12-inverse approximation of A by Theorem 6
[m,n]   = size(A); P = randn(m,p); Q = randn(n,q);
[Qbar,Rbar] = qr(P'*A*Q,0);
Arplus  = (Q/Rbar)*(P*Qbar)';
\end{verbatim}
\vspace{10pt}
Introduction of $P$ and $Q$ allows us to see the general design patterns behind matrix decompositions of randomized linear algebra and of the pseudoinverse approximations they require, either directly or indirectly. To be more specific, let us consider the full-rank factorization:
\begin{align*}
AQ = CR,
\end{align*}
where $Q$ is random. When $R^+=R^T(RR^T)^{-1}$, then Theorem \ref{thm:general-MP-formula} reveals the following pattern:
\begin{align}\label{eq:PtCCplus}
A^+ \approx A^+_p =(P^TA)^+P^TCR(CR)^+ = (P^TA)^+P^TCC^+.
\end{align}
Good algorithms come from the good choice of $P$ and $Q$, and $C$ and $R$.

The \textbf{randomized SVD} algorithm (see Halko and Tropp \cite{halko2011finding}) makes an excellent choice of $C$ to encode the basis for the column space of $A$. First, it finds an orthogonal basis by calculating the QR decomposition:
\begin{align*}
AQ = \bar{Q}\bar{R},\quad\text{where}\quad \bar{Q}^T\bar{Q} = I.
\end{align*}
Then it sets $P = \bar{Q}$ to obtain
\begin{align*}
A^+ \approx A^+_p &= (P^TA)^+P^TAQ(AQ)^+ 
\\
&= (P^TA)^+P^T\bar{Q}\bar{R}(\bar{Q}\bar{R})^+
\\
&= (P^TA)^+P^T\bar{Q}\bar{Q}^T
\\
&= (\bar{Q}^TA)^+(\bar{Q}^T\bar{Q})\bar{Q}^T = (\bar{Q}^TA)^+\bar{Q}^T.
\end{align*}
The oblique projector $A(P^TA)^+P^T$ becomes an orthogonal one $A(\bar{Q}^TA)^+\bar{Q}^T$, projecting onto the column space $\hbox{\textbf{C}}(\bar{Q})$ along the nullspace $\hbox{\textbf{N}}(\bar{Q}^T)$, as in Figure \ref{fig:oblique projection example}. The following Matlab code illustrates the idea:
\vspace{10pt}
\begin{verbatim}
% Pseudoinverse approximation in randomized SVD 
[~,n]   = size(A);      Q = randn(n,q); 
[Qbar,~] = qr(A*Q,0);   P = Qbar;
Arplus  = pinv(P'*A)*P'; 
% Randomized SVD part calculates Ur, S, and V for a much smaller matrix P'*A:
% [Ur,S,V] = svd(P'*A,’econ’); Ar = P*Ur*S*V'
\end{verbatim}
\vspace{10pt}

Now, let us consider the \textbf{CUR factorization}. It approximates:
\begin{align*}
A \approx \bar{C}U\bar{R},
\end{align*}
where $C = A(:,\mathcal{J}) = AI_{n}(:,\mathcal{J}) = AQ$ is a subset of columns of $A$ indexed by $\mathcal{J}$, $R = A(\mathcal{I},:) = I_m^T(:,\mathcal{I})A = P^TA$ is a subset of rows of $A$ indexed by $\mathcal{I}$, and $U$ is the mixing matrix. As demonstrated in \cite{hamm2020perspectives}, the mixing matrix can be given the form of $U = A(\mathcal{I},\mathcal{J}) = I_m^T(\mathcal{I},:)AI_{n}(:,\mathcal{J})$. Then,
\begin{align*}
A^+ \approx \bar{R}^+U\bar{C}^+ = 
A(\mathcal{I},:)^+
A(\mathcal{I},\mathcal{J})
A(:,\mathcal{J})^+.
\end{align*}
That is formula in Theorems \ref{thm:general-MP-formula} and \ref{thm:Nystroem-MP-formula} with $P = I_m(:,\mathcal{I})$ and $Q = I_{n}(:,\mathcal{J})$, namely,
\begin{align*}
A^+\approx A^+_p 
&= (AI_m(:,\mathcal{I}))^+I_m(:,\mathcal{I})AI_n(:,\mathcal{J})(AI_n(:,\mathcal{J}))^+,
\end{align*}
as the following Matlab code clearly illustrates:
\vspace{10pt}
\begin{verbatim}
% Randomized pseudoinverse approximation in CUR decomposition
[m,n]   = size(A);
I       = randperm(m,k);    
J       = randperm(n,k);
Arplus  = pinv(A(I,:))*A(I,J)*pinv(A(:,J)); 
% or equivalently,
% In      = eye(n);         Im      = eye(m);
% Q       = In(:,J);        P       = Im(:,I);
% Arplus  = pinv(P'*A)*(P'*A*Q)*pinv(A*Q);
\end{verbatim}
\vspace{10pt}
Selecting rows and columns in $\mathcal{I}$ and $\mathcal{J}$ is crucial for ensuring approximation accuracy. Basic methods for this selection include pivoting, sampling, and randomized pivoting. For a comprehensive overview and analysis of accuracy in the CUR decomposition, refer to \cite{dong2023simpler,park2024accuracy}. Notably, these results are frequently derived using oblique projectors, as illustrated in \cite{karpowicz2021theory}.

\subsection{Sparse sensor placement}

We can apply matrix sampling strategies of Theorem \ref{thm:general-MP-formula} to address the {sparse sensor placement problem} \cite{manohar2018data,manohar2021optimal}. Then, we revisit the generalized Nystr{\"o}m decomposition, illustrating the role of the pseudoinverse approximation involved. 

The goal of {sparse sensor placement} is to construct a sparse measurement matrix $P_p$ that selects a subset of $p$ rows of a data (dictionary or basis) matrix $A$ that allows for an accurate reconstruction of observation $\boldsymbol{y}$. We want to calculate a good estimate $\boldsymbol{y}_e$ of $\boldsymbol{y}$ in the column space $\hbox{\textbf{C}}(A)$ based on selected samples $\boldsymbol{y_s} = P^T\boldsymbol{y}$ (we can only observe $\boldsymbol{y}$ at $p$ locations in $P^T$). That translates to the following low-rank projection:
\begin{align}
\boldsymbol{y}_e 
= A (P_p^TA)^+P_p^T \boldsymbol{y} 
= AA_p^+ \boldsymbol{y}. 
\end{align}
The column space projection involves an approximation $A^+ \approx (P_p^TA)^+P_p^T$. We want to find $P_p = I_m(:,\mathcal{I})$, where $\mathcal{I}$ is the subset of $p < \mathrm{rank}(A)$ rows. 

Well-known solutions come from (deterministic) pivoting in QR and LU decompositions. The permutation matrix $P$ of the full-rank column-pivoted QR decomposition is an excellent choice. The algorithm registers in $P$ the indices of columns of $A$ with the largest norm to improve numerical stability. To get rows, we calculate:
\begin{align*}
A^TP = \bar{Q}\bar{R}. 
\end{align*}
Then, applying (\ref{eq:general-MP-formula}) with $Q = \bar{Q}$ we get:
\begin{align*}
A^+ = (P^TA)^+P^T A Q(A Q)^+ 
&= 
(P^TA)^+P^T P \bar{R}^T\bar{Q}^T \bar{Q}(A \bar{Q})^+ 
\\
&=
(P^TA)^+ \bar{R}^T (A \bar{Q})^+ 
\\
&= 
(\bar{R}^T\bar{Q}^T)^+ \bar{R}^T (P\bar{R}^T\bar{Q}^T\bar{Q})^+
\\
&=
(P^TA)^+P^T.
\end{align*}
By selecting $p$ rows from $P$, we get the desired low-rank approximation:
\begin{align}
\boldsymbol{y}_e = A (P^T_pA)^+P^T_p \boldsymbol{y}.
\end{align}
When applied to $A$,
\begin{align}\label{eq:QRrecon}
A = AA^+A 
\approx
A 
(P^T_pA)^+P^T_p
A = AA^+_pA.
\end{align}
The result is the following Matlab code (see also \cite{brunton2019data}):
\vspace{10pt}
\begin{verbatim}
[~,~,smx] = qr(A','vector');
PT      = Im(smx(1:p), :); 
ys      = PT*y;     
ye      = A*pinv(PT*A)*ys;
Arplus  = pinv(PT*A)*PT;
Ar      = A*Arplus*A;
\end{verbatim}
\vspace{10pt}

Another solution with competitive properties is the LU decomposition with complete pivoting (row and column permutations):
\begin{align*}
BAD = LU.
\end{align*}
In this case, by Theorem \ref{thm:Nystroem-12ginv-formula} we have:
\begin{align}
A^g = D(BAD)^+B.
\end{align}
By selecting $p$ rows from $B$ and $q$ columns from $D$, we obtain the estimation:
\begin{align}\label{eq:LUSSP}
\boldsymbol{y}_e = AA^g_p \boldsymbol{y} = A
D_q(B_pAD_q)^+B_p\boldsymbol{y}.
\end{align}
Also, the following reconstruction equation holds:
\begin{align}\label{eq:LUrecon}
A = AA^gA 
\approx
A 
D_q(B_pAD_q)^+B_p
A.
\end{align}
The Matlab code illustrates the idea:
\vspace{10pt}
\begin{verbatim}
[L,W,B,D]   = lu(sparse(A)); 
Arplus      = D(:,1:q)*pinv(full(B(1:p,:)*A*D(:,1:q)))*B(1:p,:);
Ar          = A*Arplus*A;
\end{verbatim}
\vspace{10pt}

That brings us to the generalized Nystr{\"o}m decomposition. As demonstrated in \cite{nakatsukasa2020fast,karpowicz2021theory}, many randomized linear algebra algorithms can be written as two-sided projections that give rise to that decomposition. An $m$ by $n$ matrix $A$ can be approximated with two sketching matrices, an $n$ by $p$ matrix $\Omega_c$ for the column space, and $m$ by $p$ matrix $\Omega_r$ for the row space:
\begin{align}
\begin{aligned}
A \approx A\Omega_c(\Omega_r^*A\Omega_c)^{+}\Omega_r^*A = AA^+_pA.
\end{aligned}
\end{align}
The relationship with (\ref{eq:LUSSP}) and (\ref{eq:LUrecon}) is straightforward. By Theorem~\ref{thm:general-MP-formula}, the generalized Nystr{\"o}m decomposition requires the low-rank pseudoinverse approximation:
\begin{align}
\begin{aligned}
A^+ \approx A^+_p = \Omega_c(\Omega_r^*A\Omega_c)^{+}\Omega_r^* = Q(P^TAQ)^+P^T.
\end{aligned}
\end{align}
As a result, in \cite{nakatsukasa2020fast} we get the following implementation:
\begin{verbatim}
AQ      = A*Q; PTA = P'*A; 
[Qbar,Rbar] = qr(P'*A*Q,0);
Arplus  = (Q/Rbar)*(P*Qbar)';
Ar      = A*Arplus*A;
\end{verbatim}
In other words, low-rank $A$ comes from low-rank $A^+$.

\subsection{Effective resistance calculations}
\label{subsection:Effective resistance calculations}

Consider a network consisting of $n$ nodes connected by $m$ edges. Each row of the $m$ by $n$ incidence matrix $A$ representing that network shows which two nodes are connected. Edge $i$ leaves node $j$ when $A_{ij} = -1$, and goes to node $k$ from $i$ when $A_{ik} = 1$. Then, given a diagonal $m$ by $m$ matrix $W$ of edge conductance, the weighted Laplacian matrix describing the network is:
\begin{align}
L = A^T W A.
\end{align}
When external flows $\boldsymbol{f}$ are applied to the nodes, and $\boldsymbol{1}^T\boldsymbol{f} = 0$, the equilibrium potentials $\boldsymbol{x}$ satisfy the equation (see \cite{strang:cse} for more details):
\begin{align}
L\boldsymbol{x} = \boldsymbol{f}.
\end{align}
The Laplacian matrix is singular with rank $n-1$ for a connected graph, so the potentials are determined only up to an additive constant $a\boldsymbol{1}$. By Theorem~\ref{thm:Penrose}, the general equilibrium solution is
\begin{align}
\boldsymbol{x} = L^+\boldsymbol{f} + (I_n-L^+L)\boldsymbol{y} =  L^+\boldsymbol{f} + a\boldsymbol{1}.
\end{align}

The effective resistance $R_{ij}$ between nodes $i$ and $j$ measures how far apart (in terms of the overall energy or information flow) the nodes are in the network with edge conductance $W$. To determine that distance, we set $\boldsymbol{f} = \boldsymbol{e}_i - \boldsymbol{e}_j$ to find equilibrium potentials:
\begin{align}
\boldsymbol{x} = L^+ (\boldsymbol{e}_i - \boldsymbol{e}_j) + a\boldsymbol{1}.
\end{align}
Then, we calculate the potential difference $x_i - x_j = (\boldsymbol{e}_i-\boldsymbol{e}_j)^T\boldsymbol{x}$ to obtain the desired effective resistance:
\begin{align}
R_{ij} = x_i - x_j = (\boldsymbol{e}_i - \boldsymbol{e}_j)^T \boldsymbol{x} =
(\boldsymbol{e}_i - \boldsymbol{e}_j)^T L^+ (\boldsymbol{e}_i - \boldsymbol{e}_j).
\end{align}
The effective resistance matrix $R$ is given by the formula:
\begin{align}
R = \operatorname{diag}(L^+)\,\mathbf{1}^T + \mathbf{1}\,\operatorname{diag}(L^+)^T - 2L^+.
\end{align}

This important matrix is computed from the pseudoinverse of the graph Laplacian matrix. For large networks, that can be computationally prohibitive. The formulas derived in this paper offer another perspective for efficient calculations, complementing the large body of work on effective resistance \cite{Drfler2011KronRO,Dwaraknath2023TowardsOE,Young2013ANN2,Spielman2008GraphSB}.

One way is to calculate an approximation $L^+_p = Q(P^TLQ)^+P^T$ by random sampling with $P$ and $Q$. That immediately gives approximation:
\begin{align}
R_p = \operatorname{diag}(L_p^+)\,\mathbf{1}^T + \mathbf{1}\,\operatorname{diag}(L_p^+)^T - 2L_p^+.
\end{align}
However, we would also like to explore another way. 

Let us partition the nodes into disjoint subsets $\mathcal{S}$ and $\mathcal{T}$, where $\mathcal{S}\cup \mathcal{T} = \{1,...,n\}$. Consider $\mathcal{S} = \{i,j\}$ selecting only two nodes in the network. We would like to estimate the effective resistance between these two nodes. Taking $P = Q = I_n(:,\mathcal{S})$ and denoting $\boldsymbol{d} = [1, -1]^T$, we can consider the approximation:
\begin{align*}
\tilde{R}_{ij} = 
(\boldsymbol{e}_i - \boldsymbol{e}_j)^T L^+_p (\boldsymbol{e}_i - \boldsymbol{e}_j)
&=
(Q\boldsymbol{d})^TQ(P^TLQ)^+P^T
(P\boldsymbol{d})
\\
&=
\boldsymbol{d}^T 
\begin{bmatrix}
L_{ii} & L_{ij}\\L_{ji} & L_{jj}    
\end{bmatrix}^+ 
\boldsymbol{d} 
=
\boldsymbol{d}^T 
L_\mathcal{SS}^+ 
\boldsymbol{d}. 
\end{align*}

To examine how good an approximation that is in comparison to the exact value of effective resistance $R_{ij}$, it is good to start with the Kron-reduced form of $L$ or its Schur complement:
\begin{align*}
\hat{L}_{\mathcal{S}} = 
L_\mathcal{SS}-L_\mathcal{ST}L^+_\mathcal{TT}
L_\mathcal{TS} = L_\mathcal{SS} - E_{\mathcal{S}},
\end{align*}
given the permuted block decomposition for $\mathcal{S}$, namely:
\begin{align*}
L = 
\begin{bmatrix}
L_\mathcal{SS} & L_\mathcal{ST}
\\
L_\mathcal{TS} & L_\mathcal{TT}
\end{bmatrix}.
\end{align*}
As demonstrated in \cite{Drfler2011KronRO}, the effective resistance $R_{ij}$ is invariant with respect to that Kron reduction, i.e.,
\begin{align*}
R_{ij} 
= (\boldsymbol{e}_i - \boldsymbol{e}_j)^T L^+ (\boldsymbol{e}_i - \boldsymbol{e}_j)
= (\hat{\boldsymbol{e}}_i - \hat{\boldsymbol{e}}_j)^T \hat{L}^+_\mathcal{S} 
(\hat{\boldsymbol{e}}_i - \hat{\boldsymbol{e}}_j).
\end{align*}
That provides a convenient way for us to examine the relation between $R_{ij}$ and $\tilde{R}_{ij}$ for any $\mathcal{S} = \{i,j\}$. We now prove the following result.

\begin{theorem}\label{thm:effective resistance estimate}
Consider a connected graph described by a weighted positive semidefinite Laplacian $L$ and let $R_{ij}$ denote the effective resistance between nodes $i$ and $j$:
\begin{align}
R_{ij} = (\boldsymbol{e}_i - \boldsymbol{e}_j)^\top L^+ (\boldsymbol{e}_i - \boldsymbol{e}_j).
\end{align}
Let $\tilde{R}_{ij}$ be the approximation defined from the principal submatrix 
$L_{\mathcal{SS}}$ with $\mathcal{S} = \{i,j\}$ and $\boldsymbol{d} = [1,-1]^\top$ by
\begin{align}
\tilde{R}_{ij} = \boldsymbol{d}^\top L_{\mathcal{SS}}^+ \boldsymbol{d}.
\end{align}
Then, $\tilde{R}_{ij} \le R_{ij}$ for all $i,j$. 

Moreover, for any nodes $i,j,k$ the following inference rules hold,
\begin{align}\label{eq:inference-rules}
\begin{aligned}
\text{if}\ \tilde{R}_{ij} - \tilde{R}_{ik} < -\gamma,\ &\text{then}\  R_{ij} - R_{ik} < 0,\\
\text{if}\ \tilde{R}_{ij} - \tilde{R}_{ik} > \phantom{-}\gamma,\ &\text{then}\  R_{ij} - R_{ik} > 0,
\end{aligned}
\end{align}
with $\gamma = 2/\lambda_2(L)$.

Equivalently, the approximation error on the effective resistance differences satisfies the following inequality:
\begin{align}
\bigl| (\tilde{R}_{ij} - \tilde{R}_{ik}) - (R_{ij} - R_{ik}) \bigr|
\le \frac{2}{\lambda_2(L)} \qquad\text{for all distinct } i,j,k.
\end{align}
\end{theorem}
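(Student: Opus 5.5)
The plan is to show that the approximation error $\epsilon_{ij} := R_{ij} - \tilde{R}_{ij}$ always lies in the interval $[0,\, 2/\lambda_2(L)]$. All three claims of Theorem~\ref{thm:effective resistance estimate} then follow from this one fact. The tools are the Kron-reduction invariance $R_{ij} = \boldsymbol{d}^\top \hat{L}_{\mathcal{S}}^+ \boldsymbol{d}$ quoted above, and a variational characterization of quadratic forms in the pseudoinverse.

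\textbf{Step 1: the two matrices are ordered.}
Since the graph is connected and $\mathcal{T}$ is nonempty, $L_{\mathcal{TT}}$ is positive definite, so $L^+_{\mathcal{TT}} = L^{-1}_{\mathcal{TT}}$. It follows that
\begin{align*}
E_{\mathcal{S}} = L_{\mathcal{ST}} L^{-1}_{\mathcal{TT}} L_{\mathcal{TS}} \succeq 0,
\qquad\text{hence}\qquad
L_{\mathcal{SS}} \succeq \hat{L}_{\mathcal{S}} \succeq 0,
\end{align*}
where $\hat{L}_{\mathcal{S}} \succeq 0$ because it is a Schur complement of the PSD matrix $L$. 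Moreover, $\hat{L}_{\mathcal{S}}$ is itself a $2\times 2$ Laplacian with zero row sums. Therefore it has the form $\hat{L}_{\mathcal{S}} = c\,\boldsymbol{d}\boldsymbol{d}^\top$ with $c>0$, and $\boldsymbol{d}$ lies in its column space. By Lemma~\ref{lemma:C-deflation-N-inflation}-style reasoning, $L_{\mathcal{SS}} \succeq \hat{L}_{\mathcal{S}}$ gives $\nullspace{L_{\mathcal{SS}}} \subset \nullspace{\hat{L}_{\mathcal{S}}}$. Taking orthogonal complements, $\boldsymbol{d} \in \colspace{\hat{L}_{\mathcal{S}}} \subset \colspace{L_{\mathcal{SS}}}$.

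\textbf{Step 2: underestimation.}
I will use the fact that for symmetric PSD $M$ and $\boldsymbol{d}\in\colspace{M}$,
\begin{align*}
\boldsymbol{d}^\top M^+ \boldsymbol{d} = \max_{\boldsymbol{x}}\bigl(2\boldsymbol{d}^\top\boldsymbol{x} - \boldsymbol{x}^\top M\boldsymbol{x}\bigr).
\end{align*}
This holds because the maximizer solves $M\boldsymbol{x} = \boldsymbol{d}$, with solution $\boldsymbol{x} = M^+\boldsymbol{d}$, which fits the general-solution discussion after Theorem~\ref{thm:Penrose}. The right-hand side is order-reversing in $M$. Applying it with $M = L_{\mathcal{SS}} \succeq \hat{L}_{\mathcal{S}}$ gives $\tilde{R}_{ij} \le \boldsymbol{d}^\top \hat{L}_{\mathcal{S}}^+\boldsymbol{d} = R_{ij}$. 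The delicate point, and the one I expect to need the most care, is the range condition that makes this characterization valid for a pseudoinverse; Step 1 supplies it.

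\textbf{Step 3: the upper bound and the differences.}
Since $\boldsymbol{e}_i - \boldsymbol{e}_j \perp \boldsymbol{1} = \nullspace{L}$, the Rayleigh quotient bound gives
\begin{align*}
R_{ij} \le \|\boldsymbol{e}_i-\boldsymbol{e}_j\|^2/\lambda_2(L) = 2/\lambda_2(L).
\end{align*}
Also $\tilde{R}_{ij}\ge 0$ because $L_{\mathcal{SS}}^+ \succeq 0$. Combining these with Step 2 yields $0\le \epsilon_{ij}\le \gamma$ with $\gamma = 2/\lambda_2(L)$. Writing
\begin{align*}
(R_{ij}-R_{ik}) = (\tilde{R}_{ij}-\tilde{R}_{ik}) + (\epsilon_{ij}-\epsilon_{ik}),
\end{align*}
where $\epsilon_{ij}-\epsilon_{ik}\in[-\gamma,\gamma]$, gives the stated error bound on differences immediately. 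It also gives the inference rules: if $\tilde{R}_{ij}-\tilde{R}_{ik} < -\gamma$, then $R_{ij}-R_{ik} < -\gamma + \gamma = 0$, and symmetrically for the other rule.
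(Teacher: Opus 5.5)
Your proposal is correct and takes essentially the same route as the paper. The variational identity $\boldsymbol{d}^\top M^+\boldsymbol{d}=\max_{\boldsymbol{x}}\bigl(2\boldsymbol{d}^\top\boldsymbol{x}-\boldsymbol{x}^\top M\boldsymbol{x}\bigr)$ applied to $\hat{L}_{\mathcal{S}}\preceq L_{\mathcal{SS}}$, together with Kron-reduction invariance, gives $\tilde{R}_{ij}\le R_{ij}$, and the spectral bound $R_{ij}\le 2/\lambda_2(L)$ then yields $0\le\epsilon_{ij}\le\gamma$, the difference bound, and the inference rules. You are more careful than the paper on points it leaves implicit: $L_{\mathcal{TT}}\succ 0$ (so $E_{\mathcal{S}}\succeq 0$), the range condition $\boldsymbol{d}\in\colspace{\hat{L}_{\mathcal{S}}}\subset\colspace{L_{\mathcal{SS}}}$ that makes both suprema equal the pseudoinverse quadratic forms, and $\tilde{R}_{ij}\ge 0$.
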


\begin{proof}

Let us first observe that for any symmetric $L\succeq 0$ such that $LL^+\boldsymbol{d} = \boldsymbol{d}$, the properties of pseudoinverse imply that $\boldsymbol{d}^T(L^+)^TL\boldsymbol{u} = \boldsymbol{u}^TLL^+\boldsymbol{d} = \boldsymbol{u}^T\boldsymbol{d}$. Thus,
\begin{align*}
2\boldsymbol{d}^T\boldsymbol{u}-\boldsymbol{u}^TL\boldsymbol{u} 
&= 
2\boldsymbol{d}^T\boldsymbol{u}-\boldsymbol{u}^TL\boldsymbol{u} 
- \boldsymbol{d}^TL^+\boldsymbol{d} + \boldsymbol{d}^TL^+\boldsymbol{d}
\\
&=
-(\boldsymbol{u}^T-\boldsymbol{d}^TL^+)L(\boldsymbol{u}-L^+\boldsymbol{d})
+ \boldsymbol{d}^TL^+\boldsymbol{d}
\\
&= 
-(\boldsymbol{u}-L^+\boldsymbol{d})^T L (\boldsymbol{u}-L^+\boldsymbol{d})
+ \boldsymbol{d}^TL^+\boldsymbol{d} 
\\
&\le  \boldsymbol{d}^TL^+\boldsymbol{d} 
= \sup\{2\boldsymbol{d}^T\boldsymbol{u}-\boldsymbol{u}^TL\boldsymbol{u}|\ 
\boldsymbol{u}\perp \nullspace{L}\}.
\end{align*}
Consider now a partition of $L$ according to $\mathcal{S}$ and its complement $\mathcal{T}$:
\begin{align*}
L = 
\begin{bmatrix}
L_\mathcal{SS} & L_\mathcal{ST}
\\
L_\mathcal{TS} & L_\mathcal{TT}
\end{bmatrix}.
\end{align*}
The Kron-reduced form of $L$ is the Schur complement:
\begin{align*}
\hat{L}_{\mathcal{S}} = 
L_\mathcal{SS}-L_\mathcal{ST}L^+_\mathcal{TT}
L_\mathcal{TS} = L_\mathcal{SS} - E_{\mathcal{S}}.
\end{align*}
Recall that, as demonstrated in \cite{Drfler2011KronRO}, the effective resistance $R_{ij}$ is invariant with respect to that Kron reduction.

It follows that $\hat{L}_{\mathcal{S}} \preceq L_{\mathcal{SS}}$, and so, for any $\boldsymbol{u}$ we have:
\begin{align*}
-\boldsymbol{u}^T\hat{L}_{\mathcal{S}}\boldsymbol{u} \ge
-\boldsymbol{u}^T{L}_{\mathcal{SS}}\boldsymbol{u}.
\end{align*}
We are interested exclusively in the resistance distance for $\boldsymbol{e}_i - \boldsymbol{e}_j = I_n(:,\{i,j\})\boldsymbol{d}$, where $\boldsymbol{d}\in\colspace{\hat{L}_{\mathcal{S}}}$. In that case, adding $2\boldsymbol{d}^T\boldsymbol{u}$ to both sides, we have:
\begin{align*}
2\boldsymbol{d}^T\boldsymbol{u}-\boldsymbol{u}^T\hat{L}_{\mathcal{S}}\boldsymbol{u} \ge
2\boldsymbol{d}^T\boldsymbol{u}-\boldsymbol{u}^T{L}_{\mathcal{SS}}\boldsymbol{u}.
\end{align*}
Taking the supremum over $\boldsymbol{u}$ preserves the inequality, therefore
\begin{align*}
\sup \{ 2\boldsymbol{d}^T \boldsymbol{u} - \boldsymbol{u}^T \hat{L}_{\mathcal{S}} \boldsymbol{u}|\ \boldsymbol{u}\perp \boldsymbol{1}_\mathcal{S}\} =
 \boldsymbol{d}^T \hat{L}_{\mathcal{S}}^+ \boldsymbol{d}
\ge 
\boldsymbol{d}^T L_{\mathcal{SS}}^+ \boldsymbol{d} = 
\sup\{ 2\boldsymbol{d}^T \boldsymbol{u} - \boldsymbol{u}^T L_{\mathcal{SS}} \boldsymbol{u}|\ \boldsymbol{u}\neq \boldsymbol{0} \},
\end{align*}
and so:
\begin{align*}
\boldsymbol{d}^T \hat{L}_{\mathcal{S}}^+ \boldsymbol{d} \ge
\boldsymbol{d}^T L_{\mathcal{SS}}^+ \boldsymbol{d}
\end{align*}
As a result, 
\begin{align*}
\tilde{R}_{ij}
= \boldsymbol{d}^T L_{\mathcal{SS}}^+ \boldsymbol{d}
\le \boldsymbol{d}^T \hat{L}_{\mathcal{S}}^+ \boldsymbol{d}
= R_{ij},
\end{align*}
and we conclude the approximation always underestimates the true resistance, $0 \le R_{ij} - \tilde{R}_{ij} = \epsilon_{ij}$.

Since $L$ is symmetric, there exists an orthonormal basis $V$ of eigenvectors corresponding to real-valued eigenvalues $0 = \lambda_1(L) \le \lambda_2(L) \le \dots \le \lambda_n(L)$. Therefore, given $L^+ = VS^+U^T$, we have
\begin{align*}
\|L^+\|_2\le \max\{0,1/\lambda_2(L),\dots,1/\lambda_n(L)\} = 1/\lambda_2(L).
\end{align*}
For any pair $p,q$ we have:
\begin{align*}
R_{pq}
= (\boldsymbol{e}_p - \boldsymbol{e}_q)^T L^+ (\boldsymbol{e}_p - \boldsymbol{e}_q)
\le \|\boldsymbol{e}_p - \boldsymbol{e}_q\|^2 \,\|L^+\|_2
= \frac{2}{\lambda_2(L)}.
\end{align*}
We have already established, that $0 \le R_{ij} - \tilde{R}_{ij} = \epsilon_{ij}$. From that we conclude: 
\begin{align*}
0 \le \epsilon_{ij} \le \frac{2}{\lambda_2(L)}.
\end{align*}
Since
\begin{align*}
|\epsilon_{ij} - \epsilon_{ik}| \le \max(\epsilon_{ij},\epsilon_{ik})
\le \frac{2}{\lambda_2(L)},
\end{align*}
we can see that:
\begin{align*}
\bigl| (\tilde{R}_{ij} - \tilde{R}_{ik}) - (R_{ij} - R_{ik}) \bigr|
\le \frac{2}{\lambda_2(L)}.
\end{align*}
This proves the stated additive error bound.

Finally, to establish the inference rules, set $\Delta = R_{ij} - R_{ik}$ and $\tilde{\Delta} = \tilde{R}_{ij} - \tilde{R}_{ik}$.
If $\tilde{\Delta} < -\gamma$ with $\gamma = 2/\lambda_2(L)$, then
\[
\Delta
= \tilde{\Delta} - (\tilde{\Delta} - \Delta)
\le \tilde{\Delta} + |\tilde{\Delta} - \Delta|
\le \tilde{\Delta} + \gamma
< -\gamma + \gamma = 0.
\]
Similarly, if $\tilde{\Delta} > \gamma$ then
\[
\Delta
= \tilde{\Delta} - (\tilde{\Delta} - \Delta)
\ge \tilde{\Delta} - |\tilde{\Delta} - \Delta|
> \gamma - \gamma = 0.
\]
This establishes the inference rules \eqref{eq:inference-rules}.

\end{proof}

Consider a graph with $n=4$ nodes, unit edge weights (conductances $W=I$), and the Laplacian matrix $L$:
\begin{align*}
L = \begin{pmatrix}
1 & -1 & 0 & 0 \\
-1 & 2 & -1 & 0 \\
0 & -1 & 2 & -1 \\
0 & 0 & -1 & 1
\end{pmatrix}.
\end{align*}
The eigenvalues of $L$ are $\{0, 2-\sqrt{2}, 2, 2+\sqrt{2}\}$, so the smallest non-zero eigenvalue is $\lambda_2(L) = 2-\sqrt{2}$. By Theorem \ref{thm:effective resistance estimate}, the upper bound on the error of the estimated resistance differences is:
\begin{align*}
\gamma = \frac{2}{\lambda_2(L)} = \frac{2}{2-\sqrt{2}} = 2+\sqrt{2} \approx 3.4142. 
\end{align*}
The true effective resistance $R_{ij}$ between nodes $i$ and $j$ can be calculated using the pseudoinverse:
\begin{align*}
L^+ = 
\dfrac{1}{8}\begin{pmatrix}
7 & 1 & -3 & -5
\\ 
1 & 3 & -1 & -3
\\ 
-3 & -1 & 3 & 1
\\
-5 & -3 & 1 & 7 
\end{pmatrix}.
\end{align*}
Let us verify the effective resistance between node $i = 1$ and nodes $j = 2, 3, 4$. From
\begin{align*}
R_{ij} = (\boldsymbol{e}_i - \boldsymbol{e}_j)^\top L^+ (\boldsymbol{e}_i - \boldsymbol{e}_j)
\end{align*}
we obtain $ R_{12}=1, R_{13}=2$ and $R_{14}=3$. 

Now, we can compare that with the estimate in Theorem~\ref{thm:effective resistance estimate} calculated for $\mathcal{S}=\{i,j\}$ and $\boldsymbol{d} = [1,-1]^\top$. We have
\begin{align*}
\tilde{R}_{ij} 
= \boldsymbol{d}^T L_{\mathcal{S}\mathcal{S}}^+ \boldsymbol{d} 
= [1,-1]
\begin{bmatrix}
L_{ii} & L_{ij}\\L_{ji} & L_{jj}
\end{bmatrix}^+
\begin{bmatrix}
1\\-1
\end{bmatrix}.
\end{align*}
The approximations are $\tilde{R}_{12} = 1$, $\tilde{R}_{13} = 1.5$ and $\tilde{R}_{14} = 2$.

Therefore, as expected, the property $R_{ij} \ge \tilde{R}_{ij}$ holds:
\begin{align*}
R_{12} = 1 \ge 1 = \tilde{R}_{12}, \quad R_{13} = 2 \ge 1.5 = \tilde{R}_{13}, \quad R_{14} = 3 \ge 2 = \tilde{R}_{14}.
\end{align*}
The absolute errors $\epsilon_{ij} = R_{ij}-\tilde{R}_{ij}$ are $\epsilon_{12} = 0$, $\epsilon_{13} = 0.5$ and $\epsilon_{14} = 1$, therefore
\begin{align*}
0 \le \epsilon_{ij} \le \gamma = 2/\lambda_2(L) \approx 3.4142.
\end{align*}
Finally, it is easy to see that whenever $\tilde{R}_{ij} - \tilde{R}_{ik} < -\gamma$, then $R_{ij} - R_{ik} < 0$ and the estimate preserves the ordering.

\section{Related Work}\label{sec:related-work}

The theory of generalized inverses has a rich history, with deep connections to modern randomized numerical linear algebra, as well as machine learning and modern AI models development. We survey the foundational results on pseudoinverses of matrix products and position our contributions within this landscape.

\subsection{Classical theory of generalized inverses}

The Moore-Penrose pseudoinverse was independently discovered by Moore using the concept of general reciprocal \cite{moore1920reciprocal}, by Bjerhammar in \cite{bjerhammar}, and famously characterized by Penrose through the four defining identities \cite{penrose1955generalized}. The comprehensive treatment by Ben-Israel and Greville \cite{ben2003generalized} remains the standard reference, consolidating decades of development in the field.

\subsubsection*{The reverse order law}
The question of when $(AB)^+ = B^+A^+$ holds was definitively answered by Greville in \cite{greville1966note}. He established that the reverse order law is valid if and only if
\begin{align}\label{eq:greville-conditions-rw}
\mathbf{C}(BB^*A^*) \subseteq \mathbf{C}(A^*)
\quad\text{and}\quad 
\mathbf{C}(A^*AB) \subseteq \mathbf{C}(B),
\end{align}
or equivalently, when $A^*ABB^* = BB^*A^*A$. Erdelyi \cite{erdelyi1967matrix} provided an independent characterization via partial isometries the same year. Theorem~\ref{thm:rol-formula} recovers a well-known sufficient condition: when $C$ has full column rank and $R$ has full row rank, the factors satisfy \eqref{eq:greville-conditions-rw} trivially with $C^+C = I_r = RR^+$.

\subsubsection*{Full-rank factorization formula}
The formula $A^+ = R^T(C^TAR^T)^{-1}C^T$ for a full-rank factorization $A = CR$ appears as Theorem~3.6.2 in \cite{ben2003generalized}. Historical notes  attribute the result to MacDuffee (1959, private communication) and subsequent refinements by Greville \cite{greville1960someapps}. Our presentation follows this classical approach but emphasizes the geometric perspective of the four fundamental subspaces.

\subsubsection*{Generalized inverses of products}
The extension to arbitrary products was developed by Cline \cite{cline1968inverses}, who studied partitioned matrices and representations for $(AB)^+$ without rank assumptions. Hartwig \cite{hartwig1986reverse} extended these results to triple products. The corrected formula in Theorem~\ref{thm:correct-formula}, namely $(CR)^+ = (C^+CR)^+(CRR^+)^+$, can be understood through this lens: the factors $(C^+CR)^+$ and $(CRR^+)^+$ handle the projections between intermediate subspaces. While the underlying mathematics is established, our presentation directly connects it to the four fundamental subspaces framework.

\subsubsection*{The $\{1,2\}$-inverse characterization}
Marsaglia and Styan \cite{marsaglia1974equalities} provided the complete characterization of $\{1\}$-inverses through formula \eqref{eq:geninv formula 1}. The constraint $Z_{22} = Z_{21}Z_{12}$ for obtaining $\{1,2\}$-inverses, which implies equal ranks of $A$ and $A^g$, is classical. Theorem~\ref{thm:general-1inv-formula} extends this by incorporating sketching matrices $P$ and $Q$ into the construction.

\subsection{Randomized numerical linear algebra}

The past two decades have witnessed remarkable developments in randomized algorithms for matrix computation. We focus on connections to pseudoinverse computation and low-rank approximation.

\subsubsection{Randomized SVD and matrix approximation}
The foundational work of Halko, Martinsson, and Tropp \cite{halko2011finding} established the randomized range-finder framework: given a target rank $k$ and oversampling parameter $p$, one computes $Y = A\Omega$ for a random $n \times (k+p)$ matrix $\Omega$, then orthogonalizes to obtain $Q$ with $\mathbf{C}(Q) \approx \mathbf{C}(A)$. The approximation $A \approx QQ^TA$ leads to randomized SVD via the smaller matrix $Q^TA$. While pseudoinverse computation is implicit---one obtains $A^+ \approx V\Sigma^+U^T$ from the randomized SVD---explicit formulas for $A^+$ in terms of sketching matrices are not developed in \cite{halko2011finding}.

The comprehensive survey by Martinsson and Tropp \cite{martinsson2020randomized} provides extensive treatment of randomized algorithms but similarly focuses on approximation rather than pseudoinverse reconstruction.

\subsubsection*{Generalized Nystr\"om approximation}
Nakatsukasa \cite{nakatsukasa2020fast} introduced the generalized Nystr\"om approximation for arbitrary $m \times n$ matrices:
\begin{align}\label{eq:gen-nystrom-rw}
\hat{A} = (A\Omega)(\Psi^TA\Omega)^+(\Psi^TA),
\end{align}
where $\Omega$ and $\Psi$ are sketching matrices. This formula involves the pseudoinverse of the core matrix $\Psi^TA\Omega$ and produces a low-rank approximation $\hat{A} \approx A$. For symmetric positive semidefinite matrices with $\Psi = \Omega$, one recovers the classical Nystr\"om approximation. Theorem~\ref{thm:Nystroem-MP-formula} shows that when $P$ and $Q$ are orthogonal and $P^TAQ = CR$ is a full-rank factorization, then $A^+ = Q(CR)^+P^T$ exactly---connecting the generalized Nystr\"om structure to pseudoinverse reconstruction rather than matrix approximation.

\subsubsection*{CUR decomposition}
The CUR decomposition approximates $A \approx CUR$ where $C$ contains selected columns, $R$ selected rows, and $U$ is a linking matrix. Mahoney and Drineas \cite{mahoney2009cur} established probabilistic guarantees, while Hamm and Huang \cite{hamm2020perspectives} provided the definitive treatment of exactness conditions. Their result states that $A = CU^+R$ and $A^+ = R^+UC^+$ hold if and only if $\mathrm{rank}(U) = \mathrm{rank}(A)$. 

The connection to our framework is immediate: setting $P = I_m(:,\mathcal{I})$ and $Q = I_n(:,\mathcal{J})$ as column-selection matrices in Theorem~\ref{thm:general-MP-formula} recovers the CUR pseudoinverse formula. Our contribution is the unified perspective showing that CUR, randomized SVD, and sparse sensor placement all emerge as special cases of the randomized formula \eqref{eq:general-MP-formula} with different choices of $P$ and $Q$.

\subsubsection*{Sketch-and-precondition methods}
Iterative approaches such as Blendenpik \cite{avron2010blendenpik} and LSRN \cite{meng2014lsrn} use sketching to construct preconditioners for iterative solvers rather than computing $A^+$ directly. Recent work by Epperly, Meier, and Nakatsukasa \cite{epperly2024fast} achieves stability through iterative refinement. These methods complement direct pseudoinverse formulas by providing practical algorithms for large-scale computation.

\subsubsection*{Effective resistance and graph Laplacians}

Effective resistance, defined as $R_{ij} = (\boldsymbol{e}_i - \boldsymbol{e}_j)^TL^+(\boldsymbol{e}_i - \boldsymbol{e}_j)$ for a graph Laplacian $L$, plays a fundamental role in spectral graph theory and network analysis \cite{spielman2012spectral,ghosh2008minimizing}. The classical bound $R_{ij} \leq 2/\lambda_2(L)$, relating effective resistance to the algebraic connectivity (Fiedler eigenvalue), is well established \cite{ghosh2008minimizing}.

D\"orfler and Bullo \cite{Drfler2011KronRO} proved that effective resistance is preserved exactly under Kron reduction (Schur complementation). Spielman and Srivastava \cite{spielman2008graph} provided multiplicative $(1\pm\varepsilon)$ approximation bounds for spectral sparsification. However, additive error bounds for effective resistance approximation in terms of algebraic connectivity appear to be new. Theorem~\ref{thm:effective resistance estimate} establishes that the principal submatrix approximation $\tilde{R}_{ij} = \boldsymbol{d}^TL_{\mathcal{SS}}^+\boldsymbol{d}$ always underestimates the true resistance with error bounded by $R_{ij} - \tilde{R}_{ij} \leq 2/\lambda_2(L)$, and that resistance orderings are preserved when differences exceed this threshold.

\subsection{Summary of contributions}

We think the novelty of this paper lies in the integration of classical generalized inverse theory with randomized linear algebra. Theorem~\ref{thm:general-MP-formula} provides a unifying lens through which established algorithms---randomized SVD, CUR decomposition, generalized Nystr\"om, and sparse sensor placement---can be understood as instances of randomized pseudoinverse computation with specific choices of sketching matrices $P$ and $Q$. The rank-preservation condition $\mathrm{rank}(P^TA) = \mathrm{rank}(AQ) = \mathrm{rank}(A)$ is the criterion separating exact reconstruction from low-rank approximation. From that unification new insights may emerge. One we present here concerns the effective resistance estimation with sketching in Theorem~\ref{thm:effective resistance estimate}. There is a beautiful link between the rank-preserving generalized inverse and the effective resistance.

\section{Conclusions}

We proposed a unified framework for the analysis of generalized inverses of matrix products, integrating classical generalized inverse theory with randomized numerical linear algebra. By revisiting the pseudoinverse of $A=CR$ from the perspective of the four fundamental subspaces, we have analyzed three central formulas, clarifying their geometric interpretations.

We believe that the principal contribution is the development of the generalized randomized framework presented in Theorems \ref{thm:general-1inv-formula} and \ref{thm:general-MP-formula}. We have rigorously demonstrated that the exact reconstruction of the Moore-Penrose inverse $A^+$ or a generalized $\{1,2\}$-inverse $A^g$ using sketched matrices $P^TA$ and $AQ$ depends on the preservation of the rank of $A$. This framework provides a lens through which established algorithms, such as randomized SVD and CUR decomposition, can be better understood as specific instances of randomized generalized inversion.

Furthermore, the application of this framework to the approximation of effective resistance in Theorem \ref{thm:effective resistance estimate} highlights its analytical utility. That approximation based on principal submatrices always underestimates the true resistance within a worst-case spectral bound while preserving the effective resistance ordering.

Exploration of randomized generalized inverses may still yield novel algorithms and deep theoretical insights into large-scale matrix computations, critical in development of modern AI models.


\begin{thebibliography}{10}
\expandafter\ifx\csname url\endcsname\relax
  \def\url#1{\texttt{#1}}\fi
\expandafter\ifx\csname urlprefix\endcsname\relax\def\urlprefix{URL }\fi
\expandafter\ifx\csname href\endcsname\relax
  \def\href#1#2{#2} \def\path#1{#1}\fi

\bibitem{ben2003generalized}
A.~Ben-Israel, T.~N. Greville, Generalized inverses: theory and applications,
  Vol.~15, Springer Science \& Business Media, 2003.

\bibitem{campbell2009generalized}
S.~L. Campbell, C.~D. Meyer, Generalized inverses of linear transformations,
  SIAM, 2009.

\bibitem{ilic2017algebraic}
D.~S.~C. Ili{\'c}, Y.~Wei, Algebraic properties of generalized inverses,
  Vol.~52, Springer, 2017.

\bibitem{halko2011finding}
N.~Halko, P.-G. Martinsson, J.~A. Tropp, Finding structure with randomness:
  Probabilistic algorithms for constructing approximate matrix decompositions,
  {SIAM Review} 53~(2) (2011) 217--288.

\bibitem{murray2023randomized}
R.~Murray, J.~Demmel, M.~W. Mahoney, N.~B. Erichson, M.~Melnichenko, O.~A.
  Malik, L.~Grigori, P.~Luszczek, M.~Dereziński, M.~E. Lopes, T.~Liang,
  H.~Luo, J.~Dongarra, Randomized numerical linear algebra : A perspective on
  the field with an eye to software (2023).
\newblock \href {http://arxiv.org/abs/2302.11474} {\path{arXiv:2302.11474}}.

\bibitem{martinsson2020randomized}
P.-G. Martinsson, J.~Tropp, Randomized numerical linear algebra: Foundations \&
  algorithms, arXiv:2002.01387 (2020).

\bibitem{penrose1955generalized}
R.~Penrose, {A generalized inverse for matrices}, in: {Mathematical Proceedings
  of the Cambridge Philosophical Society}, Vol.~51, Cambridge University Press,
  1955, pp. 406--413.

\bibitem{greville1966note}
T.~N.~E. Greville, Note on the generalized inverse of a matrix product, {Siam
  Review} 8~(4) (1966) 518--521.

\bibitem{karpowicz2021theory}
M.~P. Karpowicz, A theory of meta-factorization, arXiv preprint
  arXiv:2111.14385 (2021).

\bibitem{schott2016matrix}
J.~R. Schott, Matrix analysis for statistics, John Wiley \& Sons, 2016.

\bibitem{strang2022three}
G.~Strang, D.~Drucker, Three matrix factorizations from the steps of
  elimination, Analysis and Applications 20~(06) (2022) 1147--1157.

\bibitem{strang2023introduction}
G.~Strang, {Introduction to Linear Algebra, 6th edition}, {Wellesley-Cambridge
  Press}, 2023.

\bibitem{greville1960someapps}
T.~N.~E. Greville, \href{http://www.jstor.org/stable/2028054}{Some applications
  of the pseudoinverse of a matrix}, SIAM Review 2~(1) (1960) 15--22.
\newline\urlprefix\url{http://www.jstor.org/stable/2028054}

\bibitem{bjerhammar}
A.~Bjerhammar, A generalized matrix algebra, {Transactions of the Royal
  Institute of Technology} (1958).

\bibitem{marsaglia1974equalities}
G.~Marsaglia, G.~PH~Styan, Equalities and inequalities for ranks of matrices,
  Linear and multilinear Algebra 2~(3) (1974) 269--292.

\bibitem{nakatsukasa2020fast}
Y.~Nakatsukasa, Fast and stable randomized low-rank matrix approximation,
  arXiv:2009.11392 (2020).

\bibitem{hamm2020perspectives}
K.~Hamm, L.~Huang, Perspectives on cur decompositions, Applied and
  Computational Harmonic Analysis 48~(3) (2020) 1088--1099.

\bibitem{nakatsukasa2024sketch-and-precondition}
M.~Meier, Y.~Nakatsukasa, A.~Townsend, M.~Webb,
  \href{https://doi.org/10.1137/23M1551973}{Are sketch-and-precondition least
  squares solvers numerically stable?}, SIAM Journal on Matrix Analysis and
  Applications 45~(2) (2024) 905--929.
\newblock \href {http://arxiv.org/abs/https://doi.org/10.1137/23M1551973}
  {\path{arXiv:https://doi.org/10.1137/23M1551973}}, \href
  {https://doi.org/10.1137/23M1551973} {\path{doi:10.1137/23M1551973}}.
\newline\urlprefix\url{https://doi.org/10.1137/23M1551973}

\bibitem{dong2023simpler}
Y.~Dong, P.-G. Martinsson, {Simpler is better: a comparative study of
  randomized pivoting algorithms for CUR and interpolative decompositions},
  Advances in Computational Mathematics 49~(4) (2023) 66.

\bibitem{park2024accuracy}
T.~Park, Y.~Nakatsukasa, Accuracy and stability of cur decompositions with
  oversampling, arXiv preprint arXiv:2405.06375 (2024).

\bibitem{manohar2018data}
K.~Manohar, B.~W. Brunton, J.~N. Kutz, S.~L. Brunton, Data-driven sparse sensor
  placement for reconstruction: Demonstrating the benefits of exploiting known
  patterns, IEEE Control Systems Magazine 38~(3) (2018) 63--86.

\bibitem{manohar2021optimal}
K.~Manohar, J.~N. Kutz, S.~L. Brunton, Optimal sensor and actuator selection
  using balanced model reduction, IEEE Transactions on Automatic Control 67~(4)
  (2021) 2108--2115.

\bibitem{brunton2019data}
S.~L. Brunton, J.~N. Kutz, Data-Driven Science and Engineering: Machine
  Learning, Dynamical Systems, and Control, Cambridge University Press, 2019.

\bibitem{strang:cse}
G.~Strang,
  \href{https://epubs.siam.org/doi/abs/10.1137/1.9780961408817}{Computational
  Science and Engineering}, Wellesley-Cambridge Press, Philadelphia, PA, 2007.
\newblock \href
  {http://arxiv.org/abs/https://epubs.siam.org/doi/pdf/10.1137/1.9780961408817}
  {\path{arXiv:https://epubs.siam.org/doi/pdf/10.1137/1.9780961408817}}, \href
  {https://doi.org/10.1137/1.9780961408817}
  {\path{doi:10.1137/1.9780961408817}}.
\newline\urlprefix\url{https://epubs.siam.org/doi/abs/10.1137/1.9780961408817}

\bibitem{Drfler2011KronRO}
F.~D{\"o}rfler, F.~Bullo,
  \href{https://api.semanticscholar.org/CorpusID:10743012}{Kron reduction of
  graphs with applications to electrical networks}, IEEE Transactions on
  Circuits and Systems I: Regular Papers 60 (2011) 150--163.
\newline\urlprefix\url{https://api.semanticscholar.org/CorpusID:10743012}

\bibitem{Dwaraknath2023TowardsOE}
R.~V. Dwaraknath, I.~Karmarkar, A.~Sidford,
  \href{https://api.semanticscholar.org/CorpusID:259262023}{Towards optimal
  effective resistance estimation}, ArXiv abs/2306.14820 (2023).
\newline\urlprefix\url{https://api.semanticscholar.org/CorpusID:259262023}

\bibitem{Young2013ANN2}
G.~F. Young, L.~Scardovi, N.~E. Leonard,
  \href{https://api.semanticscholar.org/CorpusID:14473765}{{A New Notion of
  Effective Resistance for Directed Graphs—Part II: Computing Resistances}},
  IEEE Transactions on Automatic Control 61 (2013) 1737--1752.
\newline\urlprefix\url{https://api.semanticscholar.org/CorpusID:14473765}

\bibitem{Spielman2008GraphSB}
D.~A. Spielman, N.~Srivastava,
  \href{https://api.semanticscholar.org/CorpusID:8001711}{Graph sparsification
  by effective resistances}, Proceedings of the fortieth annual ACM symposium
  on Theory of computing (2008).
\newline\urlprefix\url{https://api.semanticscholar.org/CorpusID:8001711}

\bibitem{moore1920reciprocal}
E.~H. Moore, On the reciprocal of the general algebraic matrix, Bulletin of the
  American Mathematical Society 26 (1920) 294--295.

\bibitem{erdelyi1967matrix}
I.~Erd{\'e}lyi, On the matrix equation ax= $\lambda$bx, Journal of Mathematical
  Analysis and Applications 17~(1) (1967) 119--132.

\bibitem{cline1968inverses}
R.~E. Cline, Inverses of rank invariant powers of a matrix, SIAM Journal on
  Numerical Analysis 5~(1) (1968) 182--197.

\bibitem{hartwig1986reverse}
R.~E. Hartwig, The reverse order law revisited, Linear algebra and its
  applications 76 (1986) 241--246.

\bibitem{mahoney2009cur}
M.~W. Mahoney, P.~Drineas, Cur matrix decompositions for improved data
  analysis, Proceedings of the National Academy of Sciences 106~(3) (2009)
  697--702.

\bibitem{avron2010blendenpik}
H.~Avron, P.~Maymounkov, S.~Toledo, Blendenpik: Supercharging lapack's
  least-squares solver, SIAM Journal on Scientific Computing 32~(3) (2010)
  1217--1236.

\bibitem{meng2014lsrn}
X.~Meng, M.~A. Saunders, M.~W. Mahoney, Lsrn: A parallel iterative solver for
  strongly over-or underdetermined systems, SIAM Journal on Scientific
  Computing 36~(2) (2014) C95--C118.

\bibitem{epperly2024fast}
E.~N. Epperly, M.~Meier, Y.~Nakatsukasa, Fast randomized least-squares solvers
  can be just as accurate and stable as classical direct solvers,
  Communications on Pure and Applied Mathematics (2024).

\bibitem{spielman2012spectral}
D.~Spielman, Spectral graph theory, Combinatorial scientific computing 18~(18)
  (2012).

\bibitem{ghosh2008minimizing}
A.~Ghosh, S.~Boyd, A.~Saberi, Minimizing effective resistance of a graph, SIAM
  Review 50~(1) (2008) 37--66.

\bibitem{spielman2008graph}
D.~A. Spielman, N.~Srivastava, Graph sparsification by effective resistances,
  in: Proceedings of the fortieth annual ACM symposium on Theory of computing,
  2008, pp. 563--568.

\end{thebibliography}

\end{document}